\newcommand{\E}{\mathbb{E}}
\newcommand{\N}{\mathbb{N}}
\newcommand{\R}{\mathbb{R}}
\newcommand{\C}{\mathbb{C}}
\renewcommand{\H}{\mathbb{H}}
\renewcommand{\Im}{\operatorname{Im}}
\newcommand{\G}{\mathfrak{G}}
\newcommand{\m}{\mathcal }
\newcommand{\x}{{\bf x}}
 \newtheorem{thm}{Theorem}[section]
 \newtheorem{cor}[thm]{Corollary}
 \newtheorem{prop}[thm]{Proposition}
 \theoremstyle{definition}
 \newtheorem{defn}[thm]{Definition}
 \theoremstyle{remark}
 \newtheorem{rem}[thm]{Remark}
 \numberwithin{equation}{section}
\begin{document}
%
%
%
%
%
%
%
%
%
\title[Stability on $\{0,1,2,\dotsc\}^S$]{Stability on $\{0,1,2,\dotsc\}^S$: birth-death chains \\and particle systems}
\author{Thomas M. Liggett}

\address{Department of Mathematics\\
University of California\\
Los Angeles CA 90095-1555}

\email{tml@math.ucla.edu}

\thanks{A. Vandenberg-Rodes	was partially supported by NSF grants DMS-0707226 and NSF-0949250.}
\author{Alexander Vandenberg-Rodes}
\address{Department of Mathematics\\
University of California\\
Los Angeles CA 90095-1555}
\email{avandenb@math.ucla.edu}
\subjclass{Primary 60K35; Secondary 33C45, 60G50, 60J80}

\keywords{Stable polynomials, birth-death chain, negative association}

\date{2010}

\begin{abstract}
A strong negative dependence property for measures on $\{0,1\}^n$ -- \textit{stability} -- was recently developed in \cite{BBL}, by considering the zero set of the probability generating function. We extend this property to the more general setting of reaction-diffusion processes and collections of independent Markov chains. In one dimension the generalized stability property is now independently interesting, and we characterize the birth-death chains preserving it.
\end{abstract}

\maketitle

\section{Introduction}
In statistical physics a fundamental object of concern is the partition function, with its zeros having special relevance. For example, by introducing the effect of an external field, the partition function becomes a polynomial in the external field variable. As exemplified by the Lee-Yang circle theorem in the case of the Ising model \cite{leeyang}, the general location of partition function zeros can indicate possible phase transitions.

A related object in probability is the {\it probability generating function}. However, the locations of its zeros were little studied before the recent work of Borcea, Br\"and\'en, and Liggett. In \cite{BBL}, a strong negative dependence theory for measures on $\{0,1\}^n$ was obtained; in particular, it was shown that if the generating function (in $n$ variables) has no zeros with all imaginary parts positive, then the measure is negatively correlated in a variety of senses: negative association, ultra-log-concave rank sequence, Rayleigh property, and others.

The classification of linear transformations preserving the set of multivariate polynomials that are non-vanishing in circular regions was recently resolved in \cite{BBpolyaschur}, with the investigation providing a general account of such polynomials and unifying several Lee-Yang-type theorems \cite{BBpolyaschur2}.

Our results are as follows: Using this framework we will generalize the negative dependence result in \cite{BBL} to measures on $\{0,1,2,\dotsc\}^S$ -- $S$ countable -- with application to independent Markov chains and reaction-diffusion processes. The one-coordinate case is also independently interesting; more specifically, the probability measures under consideration can be decomposed into a sum of independent Bernoulli and Poisson random variables. 

Call such measures on $\{0,1,2,\dotsc\}$ {\it t-stable}. (The formal definition is given in Section 3.) In the last section we characterize most birth-and-death chains preserving this class of measures:
\begin{thm}\label{BD1}
The birth-death chain $\{X_t;t\geq 0\}$ with an infinite number of non-zero rates (e.g. irreducible on $\N$) preserves the class of t-stable measures if and only if the birth rates are constant and the death rates satisfy $\delta_k=d_1k + d_2k^2$ for some constants $d_1,d_2$.
\end{thm}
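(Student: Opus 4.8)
The plan is to pass from the semigroup acting on measures to the induced evolution of probability generating functions. Writing $f(z,t)=\sum_k p_k(t)z^k$ for the p.g.f.\ of $X_t$, the forward (master) equation becomes $\partial_t f = L^\ast f$, where $L^\ast$ is the dual of the generator. By the characterization of Section 3, $\mu$ is t-stable exactly when its p.g.f.\ lies in the Laguerre--P\'olya class $\mathcal{LP}^+$, i.e.\ when $f$ is entire with only real nonpositive zeros, $f(z)=e^{\lambda(z-1)}\prod_j(1-p_j+p_jz)$. Since $P_t\mu$ is automatically a probability measure with nonnegative coefficients, its zeros are nonpositive as soon as they are known to be real. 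Thus the whole problem reduces to determining for which rates the flow $\partial_t f=L^\ast f$ preserves real-rootedness (and entireness). The basic analytic tool is the $m$-fold root criterion: if $g\in\mathcal{LP}^+$ has a zero of order $m$ at $r$ and we perturb $g\mapsto g+tq$, then for the perturbed polynomial to remain real-rooted for small $t>0$ one needs $q$ to vanish to order $m-2$ at $r$, together with a sign condition on the first surviving derivative (the one-variable content of the preserver theory of \cite{BBL,BBpolyaschur}); here $q=L^\ast g$.

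For \textbf{necessity}, suppose the chain preserves t-stability and apply the criterion to the extremal configurations $g(z)=z^n(z+a)^3$, which are t-stable for every $a>0$ and $n\ge0$. A triple zero at $-a$ forces $L^\ast g(-a)=0$ identically in $a$; extracting the extreme coefficients in $a$ yields the vanishing of the third finite differences, $\Delta^3\beta_n=0$ and $\Delta^3\delta_n=0$ for all $n$. Hence $\beta_k$ and $\delta_k$ are quadratic in $k$, and since $\delta_0=0$ we obtain $\delta_k=d_1k+d_2k^2$, as claimed. It remains to eliminate the linear and quadratic parts of the birth rate. For this I would start the chain from the point mass at $1$ (p.g.f.\ $f(z)=z$, a degenerate Bernoulli, hence t-stable) and argue as for the Yule process: any nonconstant birth rate contributes a branching mechanism whose generating function develops a pole at some $z>1$ (the geometric/negative-binomial phenomenon), so $f(\cdot,t)$ fails to be entire and cannot be t-stable. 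This forces $\beta_k\equiv\beta$.

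For \textbf{sufficiency}, take $\beta_k\equiv\beta$ and $\delta_k=d_1k+d_2k^2$. The birth part of the flow is exactly convolution with a Poisson law: it multiplies $f$ by $e^{\beta t(z-1)}$, and $\mathcal{LP}^+$ is closed under multiplication by such factors, so it suffices to treat the pure-death case $\beta=0$. There the dual operator $L^\ast f=d_1(1-z)f'+d_2(1-z)(zf')'$ preserves the degree of $f$, and I would verify that $I+sL^\ast$ preserves $\mathcal{LP}^+$ for small $s>0$ via the double-root criterion. The point is that at a double zero $r\le0$ both the zeroth- and first-order terms drop out, since $f(r)=f'(r)=0$, leaving $L^\ast f(r)=d_2\,r(1-r)\,f''(r)$; as $r(1-r)\le0$ for $r\le0$ and $d_2\ge0$, the required condition $L^\ast f(r)\cdot f''(r)\le0$ holds, so no double zero can leave the real axis. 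Passing from $I+(t/n)L^\ast$ to $e^{tL^\ast}$ through the product limit (valid within each fixed degree, where $I+sL^\ast$ is eventually a genuine preserver) and approximating a general entire $f\in\mathcal{LP}^+$ by its real-rooted polynomial sections through Hurwitz's theorem yields preservation on all of $\mathcal{LP}^+$; reinstating the Poisson factor completes the argument.

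I expect the main obstacle to be precisely this last, analytic step for the quadratic-death term: unlike constant birth and linear death, which are literally Poissonization and independent thinning (the latter being the affine substitution $f(z)\mapsto f\bigl(1+e^{-d_1t}(z-1)\bigr)$, transparently a preserver of $\mathcal{LP}^+$), the quadratic-death flow has no such probabilistic description, so its preservation of $\mathcal{LP}^+$ must be proved by the zero-based argument, and the two limits --- the $n\to\infty$ product limit defining $e^{tL^\ast}$ within each degree, and the polynomial approximation of an arbitrary function in $\mathcal{LP}^+$ --- must be controlled so that entireness is maintained and no zero escapes to $+\infty$. A secondary technical point is the rigorous justification of the $m$-fold root equalities in the necessity direction, namely that one-sided ($t>0$) preservation from a higher-order real zero genuinely forces $L^\ast g$ to vanish there to the required order.
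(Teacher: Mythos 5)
Your necessity argument for the quadratic form of the rates is essentially the paper's: a triple root at $w$ forces $\partial_t\phi(0,w)=0$, and extracting coefficients yields $\Delta^3\beta_k=\Delta^3\delta_k=0$ (the paper justifies the one\hbox{-}sided vanishing you flag by noting that the double-root inequality $\partial_t\phi(0,w)/\partial_z^2\phi(0,w)\le 0$ combines with a sign change of $\partial_z^2\phi(0,w)$ as a third root crosses $w$). But your argument that the birth rate must be \emph{constant} has a genuine gap. You claim any nonconstant birth rate makes the p.g.f.\ non-entire ``as for the Yule process.'' That heuristic fails precisely in the regime you must rule out: once $\delta_k\sim d_2k^2$ with $d_2>0$, the quadratic deaths dominate a linear or quadratic birth rate and one expects $p_t(1,k)$ to decay super-geometrically (roughly like $\prod_j \beta_j/\delta_{j+1}\sim c^k/k!$), so the p.g.f.\ is entire and non-entireness is not the obstruction. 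The paper instead starts from the point mass at $k$, expands $p_t(k,k+1)=t(\beta_k+o(1))$ and $p_t(k,k+2)=\tfrac{t^2}{2}(\beta_k\beta_{k+1}+o(1))$, applies the polarization criterion of Theorem \ref{poly}, and reads off from the discriminant of the two large roots that $\beta_k\ge\beta_{k+1}$; a nonincreasing quadratic is constant. You need an argument of this kind.

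The sufficiency step for quadratic deaths is where your plan would actually fail. First, the reduction ``the birth part multiplies $f$ by $e^{\beta t(z-1)}$, so it suffices to treat $\beta=0$'' is false as stated: with state-dependent (quadratic) deaths the birth and death mechanisms do not commute, so the semigroup does not factor; one needs Trotter's formula, and since the pure quadratic-death chain comes down from infinity it is not Feller, forcing the paper to set up adjoint semigroups on $l^1(\N)$ with the common core $D_e$. Second, and more seriously, the Euler-scheme strategy ($I+sL^\ast$ preserves the class, then take $(I+(t/n)L^\ast)^n\to e^{tL^\ast}$) rests on an unproved and doubtful claim. The Borcea--Br\"and\'en symbol of $I+s\,z(1-z)\partial_z^2$ in degree $N$ is $(z+y)^{N-2}\left[(z+y)^2+sN(N-1)z(1-z)\right]$, which is \emph{not} real stable for small $s$, so $I+sL^\ast$ is not a preserver of real-rootedness in general; the only hope is that it preserves the smaller class of polynomials with nonpositive roots, which lies outside the classification theory (the paper says exactly this), and your double-root criterion is nowhere near sufficient to establish that: it says nothing about roots of multiplicity $m\ge 3$, nothing about the degenerate root at $z=0$ (where $z(1-z)\partial_z^2$ is singular and the correct scaling is $t$, not $t^{1/2}$), and nothing about the uniformity in $s$ needed to iterate $n$ times. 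The paper's proof works directly with $\phi(t,z)$ via the Wright--Fisher equation, Taylor expands to order $\lfloor m/2\rfloor$ in $t$ near a root of multiplicity $m$, and identifies the local limit shape as $H_m\bigl(\alpha/2\sqrt{w(w-1)}\bigr)$ (real and simple roots since $w(w-1)>0$ for $w<0$) and, at $w=0$, as Kummer's ${}_1F_1[1-m,1,-1/\alpha]$ with $m-1$ negative zeros. All terms of the expansion up to order $t^{m/2}$ contribute to the leading behavior, which is exactly why a first-order Euler step cannot see the correct root dynamics; this multi-order analysis is the missing idea in your proposal.
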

One example is the pure death chain with rates $\delta_k=k(k-1)/2$, which expresses the number of ancestral genealogies in Kingman's coalescent -- a well-studied model in mathematical biology \cite{kingman,griffiths,tavare}. In particular, by taking the initial number of particles to infinity, we obtain that the number of ancestors at any fixed time has the distribution of a sum of independent Bernoulli and Poisson random variables.

\section{Stability and Negative Association}
We first review the relationship -- established in \cite{BBL} -- between negative association and the zero set of generating functions for measures on $\{0,1\}^n$.
\begin{defn}
A polynomial $f(\x)\in \C[\x]=\C[x_1,\dotsc,x_n]$ is called {\it stable} if $f\neq 0$ on the set \[\H^n=\{(x_1,\dotsc,x_n)\in \C^n:\Im(x_j)>0 \ \forall j\}.\]
Let $\G[\x]$ be the set of all stable polynomials in the variable $\x$.

If $f$ has only real coefficients, it is also called {\it real} stable. The corresponding set of real stable polynomials is denoted $\G_\R[\x]$.
\end{defn}
Note that a univariate real stable polynomial can only have real zeros. 

One key fact from complex analysis is the (multivariate) Hurwitz's theorem on zeros of analytic functions: (see footnote 3 in \cite{choe})
\begin{thm}\label{hurwitz}
Let $\Omega$ be a connected open subset of $\C^n$. Suppose the analytic functions $\{f_k\}$ converge uniformly on compact subsets of $\Omega$ ({\it normal} convergence in the vocabulary of complex analysis). If each $f_k$ has no zeros in $\Omega$ then their limit $f$ is either identically zero, or has no zeros in $\Omega$. In particular, a normal limit of stable polynomials with bounded degree is either stable or $0$.
\end{thm}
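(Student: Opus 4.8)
The plan is to reduce the several-variables statement to the classical one-variable Hurwitz theorem by slicing $\Omega$ along complex lines. First I would recall the one-dimensional case: if analytic functions $g_k$ converge normally on a connected open $U\subseteq\C$, each $g_k$ is zero-free, and $g\not\equiv 0$, then $g$ is zero-free on $U$. This follows from the argument principle. Around any putative zero $\zeta_0$ of $g$ one can draw a small circle $\gamma\subset U$ on which $g$ does not vanish (zeros of $g$ being isolated), so $g_k\to g$ uniformly on $\gamma$ with the $g_k$ bounded away from $0$ there; then $\frac{1}{2\pi i}\oint_\gamma g_k'/g_k\to\frac{1}{2\pi i}\oint_\gamma g'/g$, and since these integer-valued zero counts converge they are eventually equal, forcing some $g_k$ to have a zero inside $\gamma$ --- a contradiction.

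Next I would carry out the reduction. Suppose $f\not\equiv 0$ but $f(\mathbf{z}_0)=0$ for some $\mathbf{z}_0\in\Omega$. For a direction $\mathbf{v}\in\C^n$ set $g(\zeta)=f(\mathbf{z}_0+\zeta\mathbf{v})$ and $g_k(\zeta)=f_k(\mathbf{z}_0+\zeta\mathbf{v})$, defined on the open set $U=\{\zeta\in\C:\mathbf{z}_0+\zeta\mathbf{v}\in\Omega\}$; let $U_0$ be the connected component of $U$ containing $0$. Each $g_k$ is zero-free on $U_0$, the convergence $g_k\to g$ is normal on $U_0$, and $g(0)=0$. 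The one-variable result then forces $g\equiv 0$ on $U_0$. If I can choose $\mathbf{v}$ so that $g\not\equiv 0$ near $\zeta=0$, this is precisely the contradiction I want.

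The key point --- and the main thing to get right --- is producing such a direction $\mathbf{v}$. Here I would invoke the identity theorem in several variables: if instead $f(\mathbf{z}_0+\zeta\mathbf{v})\equiv 0$ near $\zeta=0$ for \emph{every} $\mathbf{v}$, then the complex lines through $\mathbf{z}_0$ sweep out a full neighborhood of $\mathbf{z}_0$, so $f$ would vanish on a nonempty open set and hence on all of the connected set $\Omega$, contradicting $f\not\equiv 0$. Thus some slice $g$ is not identically zero, completing the argument. I expect this slicing step, together with the bookkeeping needed to work on the correct component $U_0$, to be the only genuine obstacle; everything else is the standard one-variable machinery.

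Finally, for the stated consequence, note that $\H^n$ is open and connected, and that a normal limit of polynomials of degree at most $d$ is again a polynomial of degree at most $d$: uniform convergence on compact sets forces the finitely many coefficients to converge, e.g. via the Cauchy integral formula for the coefficients. Applying the theorem with $\Omega=\H^n$ shows that the limit $f$ is either identically $0$ or has no zeros in $\H^n$, i.e.\ $f$ is stable or $0$.
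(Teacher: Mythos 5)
Your argument is correct. Note that the paper itself offers no proof of this theorem --- it simply cites the literature (footnote 3 of the Choe--Oxley--Sokal--Wagner paper) --- so there is nothing to compare against line by line; what you have written is a legitimate self-contained proof. The reduction to the one-variable case by slicing along complex lines through a putative zero $\mathbf{z}_0$ is the standard route, and you correctly identify and handle the one genuine issue: producing a direction $\mathbf{v}$ for which the slice $\zeta\mapsto f(\mathbf{z}_0+\zeta\mathbf{v})$ is not identically zero near $0$, via the observation that otherwise $f$ would vanish on a ball around $\mathbf{z}_0$ and hence, by the identity theorem and connectedness, on all of $\Omega$. Two small points worth making explicit if you write this up: first, in the one-variable step you need $g_k'\to g'$ uniformly on the circle $\gamma$, which follows from normal convergence by the Cauchy estimates; second, for the ``in particular'' clause, the cleanest justification that a normal limit on $\H^n$ of polynomials of degree at most $d$ is again such a polynomial is that the space of these polynomials is finite-dimensional, so the sup norm over any compact set with nonempty interior is equivalent to the norm given by the maximum of the coefficients --- your Cauchy-formula argument on a polydisc inside $\H^n$ amounts to the same thing and is also fine.
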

For $\mu$ a probability measure on $\{0,1\}^n$, let \begin{equation}\label{generating}f_\mu(x_1,\dotsc,x_n)=\sum_{i_1,\dotsc, i_n=0}^1 \mu(i_1,\dotsc,i_n) x_1^{i_1}\cdots x_n^{i_n} = \E^\mu x_1^{\eta(1)}\cdots x_n^{\eta(n)}.\end{equation}
The last expression is just compact notation for the middle sum -- the $\eta(i)$ are the coordinate variables for $\mu$. $f_\mu$ is known as the {\it probability generating function} for $\mu$. With this identification between measures and polynomials, we will freely abuse notation by referring to measures with stable generating functions as {\it stable measures} (such measures are also termed {\it Strongly Rayleigh} \cite{BBL}, by their connection with the Rayleigh property).

The concept of stability easily generalizes to countably many coordinates -- a measure $\mu$ on $\{0,1\}^S$ is stable if every projection of $\mu$ onto finite subsets of coordinates is stable.

While the definition of stability is purely analytic, it implies two strong probabilistic conditions. Recall that a probability measure $\mu$ is negatively associated (NA) if, for all increasing continuous functions $F,G$ depending on disjoint sets of coordinates, \[\int FG d\mu\leq \int Fd\mu\int Gd\mu.\]
The following was proved in \cite{BBL}:
\begin{thm}\label{NA}
Suppose $f_\mu$ is stable. Then $\mu$ is NA.
\end{thm}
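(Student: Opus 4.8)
The plan is to deduce negative association from the two probabilistic consequences of stability that can be read off directly: \emph{pairwise negative correlation}, and—crucially—the stability of every conditioned and field-twisted version of $\mu$. First I would record the closure properties of $\G_\R$ that turn $\mu$ into a whole family of stable measures rather than a single one. If $f_\mu$ is stable, then so is each of the following: the external-field twist $f_\mu(t_1 x_1,\dotsc,t_n x_n)$ for $t_i>0$ (immediate from $x\mapsto t x$ preserving $\H$); the projection $f_\mu|_{x_i=1}$ onto the remaining coordinates (specialization of a single variable to a real point, justified by Theorem \ref{hurwitz} via approach from within $\H$); and the two conditioned generating functions $f_\mu|_{x_i=0}$ and $\partial_{x_i} f_\mu$, corresponding to conditioning on $\{\eta(i)=0\}$ and $\{\eta(i)=1\}$ respectively—the latter using the classical fact that $\G_\R$ is closed under partial differentiation. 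Thus the class of stable measures is closed under marginalization, conditioning, and external fields.

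Next I would establish the base case. Marginalizing all but two coordinates $i,j$ leaves a bivariate real stable polynomial $a+b x_i+c x_j + d x_i x_j$ with nonnegative coefficients and degree at most one in each variable, and real stability here is exactly the Rayleigh inequality $bc\ge ad$. A short computation identifies this with $\E^\mu[\eta(i)\eta(j)]\le \E^\mu[\eta(i)]\,\E^\mu[\eta(j)]$, i.e. $\Cov(\eta(i),\eta(j))\le 0$. Because stability survives every external field, this negative correlation in fact holds for every reweighting of $\mu$ by $t_1,\dotsc,t_n>0$ and in every conditioned measure; this \emph{uniform} pairwise negative correlation is the full strength I expect to need.

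I would then run an induction on $n$ to upgrade pairwise negative correlation to full NA. By monotone approximation and bilinearity of $\Cov$ it suffices to treat $F=\mathbf 1_U$, $G=\mathbf 1_V$ for up-sets $U,V$ on disjoint coordinate sets $A,B$; marginalizing out any coordinate outside $A\cup B$ (projection) lets me assume $A\cup B=\{1,\dotsc,n\}$. Conditioning on a single coordinate $\eta(i)$ and using the law of total covariance,
\[
\Cov(F,G)=\E\big[\Cov(F,G\mid \eta(i))\big]+\Cov\big(\E[F\mid\eta(i)],\,\E[G\mid\eta(i)]\big),
\]
the first term is $\le 0$ by the inductive hypothesis applied to the (still stable) conditioned measure on the remaining coordinates.

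The main obstacle is the second term. Since $\eta(i)$ is binary, its sign is the sign of $\big(\E[F\mid\eta(i){=}1]-\E[F\mid\eta(i){=}0]\big)\big(\E[G\mid\eta(i){=}1]-\E[G\mid\eta(i){=}0]\big)$, so I would need the two conditional expectations to move in opposite directions as $\eta(i)$ increases. This is exactly where naive monotonicity fails: if $i\in A=\operatorname{supp}(F)$, then conditioning on $\{\eta(i)=1\}$ can actually \emph{decrease} $\E[F]$, because the negative correlations inside $A$ may outweigh the direct dependence of $F$ on $\eta(i)$ (already visible for the ``exactly one of two'' measure $\tfrac12 x_i+\tfrac12 x_j$). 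Overcoming this is the heart of the matter, and I expect it to require the full external-field strength from the base case rather than bare conditioning: one should twist $\mu$ by a field, use the resulting family of pairwise inequalities to control the conditional expectations simultaneously, and pass to the limit via Theorem \ref{hurwitz}. This is the step I anticipate being genuinely delicate, and where the word \emph{strongly} in ``Strongly Rayleigh'' does the real work.
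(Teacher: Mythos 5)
Your preliminary reductions are all sound and are exactly the right ingredients: closure of the stable class under external fields, specialization $x_i=1$, conditioning ($x_i=0$ and $\partial_{x_i}$), the identification of bivariate multi-affine stability with the Rayleigh inequality $bc\ge ad$, and the law-of-total-covariance induction. But the proof has a genuine gap, and it is the one you yourself flag: you never control the second term $\Cov\bigl(\E[F\mid\eta(i)],\E[G\mid\eta(i)]\bigr)$. Saying that one should ``twist by a field, use the family of pairwise inequalities, and pass to the limit'' is a hope, not an argument; pairwise negative correlation uniformly over all external fields and conditionings (i.e.\ the Rayleigh property for the whole family) is \emph{not} known to imply NA by itself, and no amount of field-twisting by itself manufactures the monotonicity you need. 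Note also that this theorem is not proved in the present paper at all --- it is quoted from \cite{BBL} --- so the comparison must be with the argument there.

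The two missing ideas, both present in \cite{BBL}, are: (i) \emph{symmetric homogenization} --- one embeds $\mu$ on $\{0,1\}^n$ into a measure $\mu_{sh}$ on $\{0,1\}^{2n}$ whose generating polynomial is homogeneous of degree $n$, and shows via the Grace--Walsh--Szeg\"o theorem that stability is preserved; since NA for $\mu_{sh}$ restricts to NA for $\mu$, this reduces the problem to homogeneous stable measures; and (ii) the \emph{Feder--Mihail induction}, which is where homogeneity does the work you are missing. For a homogeneous measure, $\sum_j\eta(j)$ is a.s.\ constant, so $\sum_j\Cov(\eta(j),F)=0$ for any $F$; combined with the Rayleigh inequalities (your base case) applied to the conditioned measures, this forces the existence of \emph{some} coordinate $j$ in the support of $F$ with $\Cov(\eta(j),F)\ge 0$, i.e.\ $\E[F\mid\eta(j)=1]\ge\E[F]$, while $\E[G\mid\eta(j)=1]\le\E[G]$ since $j\notin\operatorname{supp}(G)$. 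Conditioning on that carefully chosen $j$ (rather than an arbitrary $i$) makes the second term in your covariance decomposition nonpositive and closes the induction. Without homogenization there is no such $j$ in general --- your own ``exactly one of two'' example $\tfrac12 x_i+\tfrac12 x_j$ shows conditioning inside $\operatorname{supp}(F)$ can push $\E[F]$ the wrong way --- so the argument as proposed cannot be completed in the inhomogeneous setting.
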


The second (and less difficult) probabilistic consequence of stability is the following \cite{liggett09,avr}.
\begin{thm}\label{sums}
Suppose $\mu$ is a measure on $\{0,1\}^S$ such that $f_\mu$ is stable. Then for any $T\subset S$, \[\sum_{i\in T}\eta(i)\stackrel{d}{=}\sum_{i\in T}\zeta_i,\] where the $\zeta_i$ are independent Bernoulli variables and the equality is in distribution.
\end{thm}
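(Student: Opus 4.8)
The plan is to reduce the assertion, for each fixed $T$, to a univariate statement about the generating function of the single random variable $N=\sum_{i\in T}\eta(i)$, and then to read the Bernoulli decomposition directly off its zeros. I would first dispose of the case $|T|<\infty$. By the definition of stability the projection $\mu_T$ of $\mu$ onto the coordinates in $T$ is stable, and the generating function of $N$ is the diagonal restriction
\[ g(z)=\E^{\mu_T}z^{\sum_{i\in T}\eta(i)}=f_{\mu_T}(z,\dots,z). \]
This substitution is the key step, and it costs nothing: if $\Im z>0$ then $(z,\dots,z)\in\H^{|T|}$, so stability of $f_{\mu_T}$ forces $g(z)\neq0$ there. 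Hence $g$ is a univariate stable polynomial; its coefficients are the probabilities $P(N=k)\ge0$, and $g\not\equiv0$ because $g(1)=1$, so $g$ is real stable.

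Next I would invoke the remark following the definition of stability: a univariate real stable polynomial has only real zeros. Since the coefficients of $g$ are nonnegative with positive leading term, $g$ is strictly positive on $(0,\infty)$, and so all of its zeros lie in $(-\infty,0]$. Factoring over the zeros gives
\[ g(z)=\prod_{j=1}^{m}\bigl((1-p_j)+p_jz\bigr),\qquad p_j\in(0,1],\quad m=\deg g\le|T|, \]
and each factor is exactly the generating function of a Bernoulli$(p_j)$ variable. Because the generating function of a sum of independent variables is the product of the individual generating functions, $N$ has the law of $\sum_{j=1}^{m}\zeta_j$ with the $\zeta_j$ independent Bernoulli; padding with $|T|-m$ degenerate Bernoulli$(0)$ summands, if one wants exactly $|T|$ of them, settles the finite case.

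For infinite $T$ I would exhaust $T$ by finite sets $T_n\uparrow T$, so that $N_n=\sum_{i\in T_n}\eta(i)\uparrow N$ and the generating functions $g_n$ converge on $[0,1]$ to that of $N$. If $N$ is essentially bounded, the degrees of the $g_n$ are uniformly bounded and Theorem~\ref{hurwitz} immediately yields that their limit is stable, hence factors into Bernoulli generating functions exactly as above. The genuine difficulty is the unbounded case, and this is where I expect the main obstacle to lie: the genus of the limiting entire function is not preserved under locally uniform limits of real-rooted polynomials — the Poisson generating function $e^{\lambda(z-1)}$ is itself such a limit — so one cannot simply pass to the limit in the factorization. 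To rule out the appearance of an exponential (Poisson) factor and keep the limiting law a convolution of Bernoullis, one must prevent the zeros $\rho_j^{(n)}$ of the $g_n$ from escaping to $-\infty$, for instance through the interlacing forced by two-variable real stability when a single coordinate is adjoined, or through a Laguerre--P\'olya estimate bounding $\sum_j 1/|\rho_j^{(n)}|$. Everything preceding this limiting step is routine once the diagonal substitution and the real-rootedness of $g$ are in hand.
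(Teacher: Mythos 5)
First, a point of orientation: the paper does not prove Theorem \ref{sums} at all; it is quoted from \cite{liggett09,avr}, so there is no in-paper argument to compare against. Judged on its own terms, your finite-$T$ argument is complete and correct, and it is the standard one: diagonal substitution preserves stability (the diagonal of $\H^{|T|}$ lies in $\H^{|T|}$, and $g\not\equiv 0$ since $g(1)=1$), a univariate real stable polynomial with nonnegative coefficients has all roots in $(-\infty,0]$, and the normalized factorization over the roots is exactly a product of Bernoulli generating functions (with $p_j=1$ accounting for a root at the origin). Nothing more is needed there.

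The infinite-$T$ case, however, is a genuine gap, and you have correctly diagnosed but not closed it. Two things are missing. (i) Even before the factorization issue, locally uniform convergence of the $g_n$ on $\C$ (needed to invoke Hurwitz or Theorem \ref{poly}) is not automatic from convergence on $[0,1]$; one must first control the tails of $N$, e.g.\ via $P(N_n\geq k)\leq e_k(p^{(n)})\leq (\E N_n)^k/k!$ from the finite case, and one must also say something about the event $\{N=\infty\}$, since the asserted conclusion implicitly contains a zero--one law for $P(N=\infty)$. (ii) More seriously, as you note, the limit is a priori only in the Laguerre--P\'olya class $\overline{\G_\R[x]}$, which by Proposition \ref{tstable} means ``independent Bernoullis \emph{plus a Poisson}'': the Poisson generating function is itself a normal limit of real-rooted polynomials with nonnegative coefficients (Binomial$(n,\lambda/n)$), so real-rootedness of the $g_n$ plus a limit theorem cannot by itself exclude the exponential factor. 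Excluding it requires an argument special to $\{0,1\}$-valued coordinates that prevents the Bernoulli parameters $p_j^{(n)}$ from escaping to $0$ while their sum stays positive --- for instance a monotonicity/interlacing statement for the sorted parameters of $N_n$ versus $N_{n+1}$ coming from the two-group diagonal restriction $\E[z^{N_n}w^{N_{n+1}-N_n}]$, which is the route taken in the cited references. Your two suggested remedies point in the right direction, but neither is carried out, so as written the proof establishes the theorem only for finite $T$.
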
 
For $S$ finite this latter result has been known since the work of L\'evy \cite{levy1}. See also Pitman \cite{pitman} for more combinatorial and probabilistic properties of stable generating functions, and the connection with P\'olya frequency sequences.

\section{Stable measures on $\{0,1,2,\dotsc\}^S$}

Suppose $\mu$ is a measure on $\{0,1,2,\dotsc\}^n$. The generating function of $\mu$ is now the formal power series
\begin{equation}\label{generating2}
f_\mu(x_1,\dotsc,x_n)=\sum_{i_1,\dotsc,i_n=0}^\infty\mu(i_1,\dotsc,i_n)x_1^{i_1}\cdots x_n^{i_n}.
\end{equation}
If $\mu$ has finite support, then $f_\mu$ is a polynomial. In this case, let $N$ be the maximum degree of $f_\mu$ in any of the variables $x_1,\dotsc,x_n$. We will want to represent $f_\mu$ by a multi-affine polynomial. To do this, we recall the {\it $k$-th elementary symmetric polynomial in $m$ variables} \begin{equation}\label{symm}
e_0=1,\quad e_{k}(x_1,\dotsc,x_m):=\sum_{1\leq i_1<i_2<\cdots<i_k\leq m}x_{i_1}x_{i_2}\cdots x_{i_k}.
\end{equation}
Then for a univariate polynomial $f(x)=\sum_{k=0}^N a_k x^k$ we define its {\it $N$-th polarization} as
\[Pol_N f(x_{1},\dotsc,x_{N}):=\sum_{k=0}^N \binom{N}{k}^{-1}a_k e_k(x_{1},\dotsc,x_{N}).\] The $N$-th polarization of a multivariate polynomial is then defined to be the composition of polarizations in each variable. 
By considering $x_1=\cdots=x_N=x$, notice that if $Pol_N f\in \G[\x]$ then $f\in \G[x]$. The converse also holds:
\begin{thm}
(Grace-Walsh-Szeg\"o). Suppose $f$ has degree at most N. Then $f$ is stable iff $Pol_N f$ is stable.
\end{thm}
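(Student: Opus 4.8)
The plan is to treat the two implications separately. The forward implication---stability of $Pol_N f$ forces stability of $f$---is the one already noted: the diagonal $\{(x,\dots,x):\Im(x)>0\}$ sits inside $\H^N$ and $Pol_N f(x,\dots,x)=f(x)$ since $e_k(x,\dots,x)=\binom Nk x^k$, so any zero of $f$ in $\H$ would give a zero of $Pol_N f$ in $\H^N$. The content is the converse. Assuming $f\in\G[x]$---equivalently, since $\deg f\le N$, that all zeros of $f$ lie in the closed lower half-plane $C:=\{z:\Im(z)\le 0\}$---I must show $Pol_N f$ does not vanish on $\H^N$. It suffices to treat $\deg f=N$: a stable $f$ of lower degree is a normal limit of degree-$N$ stable polynomials (e.g.\ multiply by $(1+\epsilon x)^{N-d}$ and let $\epsilon\to0$), and Theorem~\ref{hurwitz} then transfers the (to-be-proved) non-vanishing of $Pol_N$ from the approximants to $f$.

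The conceptual heart is the case $N=2$, where $Pol_2 f(x_1,x_2)=\alpha x_1x_2+\beta(x_1+x_2)+\gamma$ is symmetric and affine in each variable and its diagonal equals $f$; thus the two zeros $r_1,r_2$ of $f$ lie in $C$. Writing $Pol_2 f$ through these zeros turns the equation $Pol_2 f(z_1,z_2)=0$ into
\[(z_1-r_1)(z_2-r_2)+(z_1-r_2)(z_2-r_1)=0.\]
Setting $a=z_1-r_1$, $b=z_2-r_2$, $c=z_1-r_2$, $d=z_2-r_1$, the hypotheses $z_i\in\H$ and $r_j\in C$ force all four of $a,b,c,d$ into $\H$, and one checks $a+b=c+d=:s$ with $\Im(s)>0$, while the equation reads $ab=-cd$. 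Thus the set $Q_s:=\{uv:u,v\in\H,\ u+v=s\}$ would have to contain both $ab$ and its negative. But $Q_s$ is convex and omits $0$ (a product of two elements of $\H$ is never $0$), so it cannot contain an antipodal pair, whose midpoint is $0$. The one genuine computation is the convexity of $Q_s$: with $u=\tfrac s2+\xi$ one gets $Q_s=\tfrac{s^2}4-\{\xi^2:|\Im(\xi)|<\tfrac12\Im(s)\}$, and the image of that strip under squaring is the interior of a parabola, hence convex. This is exactly where the convexity of $\H$ enters, and it is the geometric core of the whole theorem.

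For general $N$ I would not iterate the two-variable statement directly---naive pairwise merging fails, since the value replacing two coordinates is a zero of a quadratic and need not lie between them, so no obvious quantity decreases. Instead I would use the polar derivative $D_w p(z)=(\deg p)\,p(z)+(w-z)p'(z)$ together with Laguerre's classical theorem: if every zero of $p$ lies in a circular region $C$ and $w\notin C$, then every zero of $D_w p$ lies in $C$ (the M\"obius-transported half-plane statement whose essence is the convexity argument above). A direct computation---checked at $N=2$ by $D_{x_1}f(x_2)=2\,Pol_2 f(x_1,x_2)$ and extended by induction---gives
\[Pol_N f(x_1,\dots,x_N)=c_N\,\bigl(D_{x_1}\cdots D_{x_{N-1}}f\bigr)(x_N)\]
with $c_N\neq 0$, each polar derivative taken with respect to the running degree. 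With $\deg f=N$ all $N$ zeros of $f$ lie in $C$; applying Laguerre successively with poles $x_i=\zeta_i\in\H$ (each outside $C$) keeps all zeros in $C$, so after $N-1$ steps the degree-one result has its zero in $C$ and hence does not vanish at $\zeta_N\in\H$. By the identity, $c_N Pol_N f(\zeta_1,\dots,\zeta_N)\neq 0$.

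The main obstacle is this general-$N$ step: recognising that the two-variable lemma must be upgraded through the polar derivative and Laguerre's theorem rather than merged pairwise, and then pinning down the polarization identity and the degree bookkeeping (intermediate polar derivatives that drop degree are handled by Theorem~\ref{hurwitz} and the perturbation of $f$). By contrast the two-variable fact is elementary once the equation is put in the factored form above, and it already carries all of the geometric content that makes the half-plane the right region.
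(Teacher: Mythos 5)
The paper does not actually prove the Grace--Walsh--Szeg\"o theorem: it states it and defers to the appendix of \cite{BBpolyaschur2} and to \cite[Chapter 5]{complexpoly}, so there is no in-paper argument to compare against. Your proposal is, in substance, the classical proof of Grace's theorem via Laguerre's theorem, and it is correct. The easy direction via the diagonal, and the reduction to $\deg f=N$ by multiplying by $(1+\epsilon x)^{N-d}$ and invoking Theorem~\ref{hurwitz}, are fine. The key identity $Pol_N f(x_1,\dotsc,x_N)=\frac{1}{N!}\,(D_{x_1}\cdots D_{x_{N-1}}f)(x_N)$ does hold: a direct computation from the definition gives $Pol_N f(x_1,z,\dotsc,z)=\frac1N D_{x_1}f(z)$ (using $\binom{N-1}{k-1}/\binom{N}{k}=k/N$ and $\binom{N-1}{k}/\binom{N}{k}=(N-k)/N$), and the full iterated identity then follows from uniqueness of the symmetric multiaffine polynomial with a prescribed diagonal. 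Two remarks. First, your worry about intermediate degree drops is vacuous for the half-plane: the coefficient of $z^{n-1}$ in $D_wp$ is $a_{n-1}+nwa_n$, which vanishes only when $w$ is the centroid of the zeros of $p$; since those zeros lie in the closed lower half-plane so does their centroid, while $w\in\H$, so every polar derivative has exact degree one less and no Hurwitz patch is needed there. Second, the argument is complete only modulo Laguerre's theorem, which you invoke as a classical black box; your $N=2$ convexity computation genuinely proves the two-variable case and carries the right geometric idea, but it does not by itself yield Laguerre for general degree (the standard proof uses convexity of the image of the circular region under $u\mapsto 1/(\zeta-u)$ applied to the logarithmic derivative $p'/p$). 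Given that the paper itself only cites the theorem, resting on Laguerre as a known classical result is a reasonable level of rigor, but you should be aware that it is where the remaining content lives.
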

Many proofs of this result and its equivalent forms exist; see the appendix of \cite{BBpolyaschur2}, or \cite[chapter 5]{complexpoly}.

\begin{defn}
We say that a function $f(\x)$ defined on $\C^n$ is {\it transcendental stable}, or {\it t-stable}, if there exist stable polynomials $\{f_m(\x)\}$ such that $f_m\rightarrow f$ uniformly on all compact subsets of $\C^n$ ($f$ can then be expressed as an absolutely convergent power series on $\C^n$). Let $\overline{\G[\x]}$ be the set of all t-stable functions -- this is also known as the Laguerre-P\'olya class \cite{levin}. Let $\overline{\G_\R[\x]}$ be the set of all {\it real} t-stable functions.
\end{defn}
We will again abuse notation and say that a measure $\mu$ on $\N^n$ is {\it transcendental stable} (or {\it t-stable}) if its generating function lies in $\overline{\G_\R[\x]}$. Similarly, if $\mu$ has finite support and its generating polynomial is stable then we say that $\mu$ is stable. Of course, a stable measure is automatically t-stable.

The papers by Borcea and Br\"and\'en \cite{BBpolyaschur,weyl} characterized the linear transformations preserving stable polynomials by establishing a bijection between linear transformations preserving n-variable stability and t-stable powers series in $2n$ variables. We will not require their full result here; however, the following characterization of t-stable powers series -- the technical cornerstone upon which the above bijection rests -- will be most useful.

Recall the standard partial order on $\N^n$: $\alpha\leq \beta$ if $\alpha_i\leq \beta_i$ for all $1\leq i\leq n$. Then for $\alpha,\beta\in \N^n$ and letting $\beta! = \beta_1! \cdots \beta_n!$, define 
\begin{equation}\label{notation}
(\beta)_\alpha = \frac{\beta!}{(\beta-\alpha)!} \mbox{ if }\alpha\leq\beta,\quad (\beta)_\alpha=0 \mbox{ otherwise.}
\end{equation}
\begin{thm}[Theorem 6.1 of \cite{BBpolyaschur}]\label{poly} Let $f(\x)=\sum_{\alpha\in \N^n}c_\alpha\x^\alpha$ be a formal power series in $\x$ with coefficients in $\R$. Set $\beta_m =(m,m,\dotsc,m)\in \N^n$. Then $f(\x)\in \overline{\G_\R[\x]}$ if and only if 
\begin{equation}\label{poly2}
f_m(\x):=\sum_{\alpha\leq \beta_m}(\beta_m)_\alpha c_\alpha\left(\frac{\x}{m}\right)^\alpha \in \G_\R[\x]\cup \{0\},
\end{equation}
for all $m\in\N$. In this case, the polynomials $f_m(\x)\rightarrow f(\x)$ uniformly on compact sets.
\end{thm}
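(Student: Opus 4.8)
The plan is to realise the passage $f\mapsto f_m$ as an explicit differential operator and to follow stability through it. The starting point is the identity
\[
f_m(\x)=\left.\prod_{j=1}^{n}\Big(1+\frac{x_j}{m}\,\partial_{s_j}\Big)^{m}f(s_1,\dots,s_n)\right|_{s_1=\cdots=s_n=0},
\]
which one checks by expanding each factor with the binomial theorem and using $\big(\prod_j\partial_{s_j}^{\alpha_j}f\big)(0)=\alpha!\,c_\alpha$ together with $\prod_j\binom{m}{\alpha_j}\alpha!=(\beta_m)_\alpha$; the binomial coefficients vanish once some $\alpha_j>m$, which reproduces exactly the truncation $\alpha\le\beta_m$. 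Since $(\beta_m)_\alpha/m^{|\alpha|}=\prod_j\prod_{i=0}^{\alpha_j-1}(1-i/m)\to1$ for each fixed $\alpha$, the convergence $f_m\to f$ holds coefficientwise for free; the real content is therefore (i) to propagate stability through this operator for the forward implication, and (ii) to upgrade coefficientwise convergence to local uniform convergence for the converse.

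For the forward implication I would first reduce to polynomials: writing $f=\lim_N g_N$ with $g_N$ real stable (this is the definition of $f\in\overline{\G_\R}$), and noting that $g\mapsto g_m$ involves only the finitely many coefficients indexed by $\alpha\le\beta_m$ and so is continuous, the bounded-degree polynomials $(g_N)_m$ converge to $f_m$; Theorem \ref{hurwitz} then transfers real stability from the $(g_N)_m$ to $f_m$. Thus it suffices to treat a real stable polynomial. The key lemma is that, for a fresh variable $v$ and a distinguished variable $s$, the operation $Q\mapsto Q+v\,\partial_{s}Q$ preserves real stability: fixing the remaining variables in the open upper half-plane, $Q$ has all of its $s$-roots in the closed lower half-plane, whence $\Im(\partial_s Q/Q)<0$ whenever $\Im s>0$, and therefore every solution of $Q+v\,\partial_s Q=0$ has $\Im v<0$, so the new polynomial does not vanish on the product of upper half-planes. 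Applying this lemma once for each of the $nm$ commuting factors $(1+u_{j,i}\partial_{s_j})$, with independent fresh variables $u_{j,i}$, shows $\prod_{j,i}(1+u_{j,i}\partial_{s_j})g$ is real stable; diagonalising $u_{j,i}=x_j/m$ (a substitution that sends the upper half-plane into itself coordinatewise, hence preserves stability) and then setting $\mathbf{s}=0$ (a real point, harmless by letting $s_j\to0$ through the upper half-plane and invoking Theorem \ref{hurwitz}) yields that $g_m$ is real stable or identically $0$. This establishes the forward direction.

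For the converse, assume every $f_m$ lies in $\G_\R\cup\{0\}$. If all of them vanish then every $c_\alpha=0$ and $f\equiv0\in\overline{\G_\R}$. Otherwise the plan is to show that $\{f_m\}$ is locally uniformly bounded, hence a normal family: any locally uniform limit along a subsequence is an entire function whose Taylor coefficients are the $c_\alpha$ (coefficients being continuous under local uniform convergence), so it must equal $f$; as the limit is forced independently of the subsequence, the full sequence converges locally uniformly to $f$, and Theorem \ref{hurwitz} then places $f$ in $\overline{\G_\R}$. The main obstacle is precisely this local uniform bound, equivalently the entireness of the formal series $f$. The three-term Newton (log-concavity) inequalities carried by the coefficients of a single real stable $f_m$ only deliver geometric control of $|c_\alpha|$ and hence a finite radius of convergence; to force $|c_\alpha|^{1/|\alpha|}\to0$ one must instead exploit the stability of $f_m$ for all $m$ at once, through the higher-order Newton/Laguerre inequalities. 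This quantitative estimate --- the multivariate analogue of the classical Jensen--P\'olya convergence theorem for the Laguerre--P\'olya class --- is where the genuine difficulty sits; by comparison the stability bookkeeping of the forward direction is routine once the differentiation lemma is in hand.
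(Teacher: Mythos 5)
First, note that the paper contains no proof of this statement to compare against: Theorem \ref{poly} is imported verbatim as Theorem 6.1 of Borcea--Br\"and\'en \cite{BBpolyaschur}, so your argument can only be judged on its own terms. On those terms, the direction ``$f\in\overline{\G_\R[\x]}$ implies each $f_m\in\G_\R[\x]\cup\{0\}$'' is essentially complete and correct: the operator identity for $f_m$ checks out, the reduction to polynomials via continuity of the finitely many relevant coefficients plus Theorem \ref{hurwitz} is fine, and the key lemma that $Q\mapsto Q+v\,\partial_s Q$ preserves stability (via $\Im(\partial_sQ/Q)<0$ on the upper half-plane, with the degenerate case $\partial_sQ\equiv 0$ harmless), followed by diagonalization $u_{j,i}=x_j/m$ and the boundary limit $\mathbf{s}\to 0$ through $\H^n$ with Hurwitz, is the standard and correct mechanism.

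The reverse implication, however, is not proved, and this is a genuine gap rather than a routine omission. You lay out the right skeleton (local uniform boundedness of $\{f_m\}$ $\Rightarrow$ normal family $\Rightarrow$ every subsequential limit has Taylor coefficients $c_\alpha$ $\Rightarrow$ the full sequence converges to $f$ $\Rightarrow$ Hurwitz places $f$ in $\overline{\G_\R[\x]}$), but then explicitly concede that you cannot establish the local uniform bound, i.e.\ that the formal series $f$ is entire. That estimate is the analytic heart of the theorem: without it neither $f\in\overline{\G_\R[\x]}$ nor the asserted locally uniform convergence $f_m\to f$ follows, so only one half of the equivalence is established -- and it is the half that the paper actually leans on less (Corollary \ref{strongcor} and Proposition \ref{prop1} both invoke the unproved direction). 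Your diagnosis is accurate: Newton's inequalities for a single $f_m$ only yield geometric decay of the $|c_\alpha|$. The missing ingredient is to exploit all $m$ simultaneously, in the style of the classical Jensen--P\'olya theorem: reduce to one variable by restricting to lines $t\mapsto\a+t\b$ with $\b>0$ (real stability is equivalent to real-rootedness of all such restrictions), observe that the hypothesis makes the associated Jensen-type polynomials real-rooted for every $m$, and extract from their Newton/Laguerre inequalities, uniformly in $m$, coefficient bounds forcing the restriction to be entire of order at most $2$ and genus at most $1$ -- exactly membership in the Laguerre--P\'olya class. Until that quantitative step is supplied, the proposal should be regarded as a correct proof of the ``only if'' direction only.
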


This classification also has the following immediate consequence:
\begin{cor}\label{strongcor}

The class $\overline{\G_\R[\x]}$ is closed under convergence of coefficients. In particular, the set of t-stable probability measures on $\N^n$ is closed under weak convergence.
\end{cor}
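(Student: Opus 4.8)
The plan is to reduce everything to the purely coefficient-based criterion of Theorem \ref{poly} and then invoke the multivariate Hurwitz theorem (Theorem \ref{hurwitz}). Suppose $f^{(j)}(\x)=\sum_{\alpha}c^{(j)}_\alpha\x^\alpha$ is a sequence in $\overline{\G_\R[\x]}$ whose coefficients converge, $c^{(j)}_\alpha\to c_\alpha$ for every $\alpha\in\N^n$, and set $f(\x)=\sum_\alpha c_\alpha\x^\alpha$, a formal power series with real coefficients. The point is to establish $f\in\overline{\G_\R[\x]}$ using only the numbers $c_\alpha$, without any a priori knowledge of the domain of convergence of $f$.

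First I would fix $m\in\N$ and form the rescaled polynomials $f^{(j)}_m$ of \eqref{poly2} built from the coefficients $c^{(j)}_\alpha$. By the forward direction of Theorem \ref{poly}, each $f^{(j)}_m$ lies in $\G_\R[\x]\cup\{0\}$. The crucial structural feature is that $f^{(j)}_m$ has degree at most $m$ in each variable -- a bound independent of $j$ -- and involves only the finitely many coefficients $c^{(j)}_\alpha$ with $\alpha\le\beta_m$. Hence, as $j\to\infty$, these finitely many coefficients converge and $f^{(j)}_m$ converges coefficientwise to $f_m$, the analogous polynomial formed from the $c_\alpha$. Coefficientwise convergence of polynomials of uniformly bounded degree is nothing but uniform convergence on compact subsets of $\C^n$, so Hurwitz (Theorem \ref{hurwitz}) applies and the normal limit $f_m$ is either stable or identically zero. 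Since all the $f^{(j)}_m$, and therefore $f_m$, have real coefficients, we in fact get $f_m\in\G_\R[\x]\cup\{0\}$. As this holds for every $m$, the converse direction of Theorem \ref{poly} yields $f\in\overline{\G_\R[\x]}$; note that the same theorem simultaneously guarantees $f_m\to f$ normally, so we never need to assume in advance that $f$ is entire.

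For the assertion about probability measures, the only additional ingredient is to translate weak convergence into convergence of coefficients. On the discrete space $\N^n$ every singleton is clopen, so by the portmanteau theorem $\mu_j\Rightarrow\mu$ forces $\mu_j(\{\alpha\})\to\mu(\{\alpha\})$ for each $\alpha$; these are precisely the coefficients $c^{(j)}_\alpha$ and $c_\alpha$ of the generating functions $f_{\mu_j}$ and $f_\mu$. Since each $\mu_j$ is t-stable, the $f_{\mu_j}$ lie in $\overline{\G_\R[\x]}$, and the first part then gives $f_\mu\in\overline{\G_\R[\x]}$, i.e. $\mu$ is t-stable.

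I expect no serious obstacle here, as this is genuinely a corollary: the entire content lies in verifying the hypotheses of Hurwitz -- the degree bound uniform in $j$ together with the bookkeeping that keeps the limit real -- and in the standard identification of weak convergence with pointwise convergence of mass functions. The one step worth stating carefully is conceptual rather than computational: stability of the limit is \emph{not} read off $f$ directly, but is transferred through the bounded-degree polynomials $f_m$, which is exactly the setting where Hurwitz is legitimately applicable.
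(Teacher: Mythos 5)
Your argument is correct and is essentially identical to the paper's own proof: fix $m$, observe that the bounded-degree polynomials $f^{(j)}_m$ from Theorem \ref{poly} converge normally because only finitely many coefficients are involved, apply Hurwitz to get $f_m$ stable (or zero), and invoke Theorem \ref{poly} again. Your added remarks on the degree bound, the $\G_\R$ versus $\G$ bookkeeping, and the portmanteau step for the measure statement are just careful elaborations of the same route.
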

\begin{proof}
Suppose that for each $n$, \[f^{(n)}(\x)=\sum_{\alpha\in \N^n}c^{(n)}_\alpha \x^\alpha \in \overline{\G_\R[\x]},\] with $c_\alpha^{(n)}\rightarrow c_\alpha$ for each $\alpha$. Then for each $m$ the stable polynomials $f^{(n)}_m(\x)$, defined in (\ref{poly2}) above, converge normally to the polynomial $f_m$ likewise obtained from $f$. Hurwitz's Theorem implies that each $f_m$ is stable, and applying Theorem \ref{poly} again we conclude that $f\in\overline{\G_\R[\x]}$.
\end{proof}
By the following proposition, we can say that a measure on $\{0,1,2,\dotsc\}^S$ is t-stable if every projection onto a finite subset of coordinates is a t-stable measure.
\begin{prop}\label{proj}
The class of t-stable measures is closed under projections onto subsets of coordinates.
\end{prop}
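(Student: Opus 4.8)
The plan is to reduce the statement about projections to the already-established closure under coefficient convergence (Corollary \ref{strongcor}). Projecting a measure $\mu$ onto a subset $T$ of coordinates amounts, at the level of generating functions, to setting $x_i = 1$ for all $i \notin T$ in $f_\mu$. So the real content is: if $f(\x) \in \overline{\G_\R[\x]}$ is a power series in $n$ variables, then the specialization obtained by fixing some of the variables equal to $1$ remains t-stable in the remaining variables. Since projections onto arbitrary subsets can be built up one coordinate at a time, it suffices to treat the case of fixing a single variable, say $x_n = 1$, and show the resulting series in $x_1, \dotsc, x_{n-1}$ lies in $\overline{\G_\R[x_1, \dotsc, x_{n-1}]}$.

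First I would record why setting a variable to $1$ is the right operation. If $\mu$ on $\N^n$ has generating function $f_\mu(\x) = \E^\mu x_1^{\eta(1)} \cdots x_n^{\eta(n)}$, then the pushforward of $\mu$ under the projection forgetting the $n$-th coordinate has generating function $\E^\mu x_1^{\eta(1)} \cdots x_{n-1}^{\eta(n-1)}$, which is exactly $f_\mu(x_1, \dotsc, x_{n-1}, 1)$. So the probabilistic projection corresponds precisely to the analytic specialization $x_n = 1$, and I only need the following analytic fact: the map $f(\x) \mapsto f(x_1, \dotsc, x_{n-1}, 1)$ carries $\overline{\G_\R[\x]}$ into $\overline{\G_\R[x_1, \dotsc, x_{n-1}]}$.

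To prove that analytic fact I would work directly from the definition of t-stability together with Hurwitz's Theorem (Theorem \ref{hurwitz}). Take $f \in \overline{\G_\R[\x]}$ and, by definition, stable polynomials $f_k \to f$ uniformly on compact subsets of $\C^n$. Specialization is continuous for uniform-on-compacts convergence, so $f_k(x_1, \dotsc, x_{n-1}, 1) \to f(x_1, \dotsc, x_{n-1}, 1)$ uniformly on compact subsets of $\C^{n-1}$. It therefore suffices to know that each specialized polynomial $f_k(x_1, \dotsc, x_{n-1}, 1)$ is real stable (or identically zero); its real coefficients are clear, and the limit is then t-stable by definition. The key point is that fixing a variable to the real value $1$ preserves stability: if $g(x_1, \dotsc, x_n)$ is nonvanishing on $\H^n$, then for any fixed real $c$ the polynomial $g(x_1, \dotsc, x_{n-1}, c)$ is nonvanishing on $\H^{n-1}$, because a point of $\H^{n-1}$ together with the real coordinate $c$ need not lie in $\H^n$ --- this is the subtle point and the main obstacle, since stability only forbids zeros with all imaginary parts strictly positive, and $c$ has imaginary part $0$.

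To clear that obstacle I would use the standard trick of approaching the real value through the upper half-plane. Consider $g_\epsilon(x_1, \dotsc, x_{n-1}) := g(x_1, \dotsc, x_{n-1}, c + i\epsilon)$ for $\epsilon > 0$: now $(x_1, \dotsc, x_{n-1}) \in \H^{n-1}$ together with $c + i\epsilon$ does lie in $\H^n$, so each $g_\epsilon$ is stable by the stability of $g$. As $\epsilon \downarrow 0$, $g_\epsilon \to g(\cdot, \dotsc, \cdot, c)$ uniformly on compact sets, and these are polynomials of bounded degree, so by Hurwitz's Theorem the limit is either stable or identically zero. Applying this with $c = 1$ gives stability of each $f_k(x_1, \dotsc, x_{n-1}, 1)$, completing the single-coordinate case; iterating over the coordinates to be forgotten and invoking Corollary \ref{strongcor} only if one prefers to pass through the coefficient-convergence formulation then yields the result for projection onto an arbitrary finite subset $T$, and the countable case follows since t-stability on $\{0,1,2,\dotsc\}^S$ is defined through finite projections.
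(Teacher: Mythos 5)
Your proposal is correct and follows essentially the same route as the paper: reduce to dropping one coordinate, identify the projection with the specialization $x_n=1$, and establish stability of the specialized approximating polynomials via the perturbation $x_n = 1+i\epsilon$ together with Hurwitz's Theorem. The only cosmetic difference is that you approximate $f$ by the stable polynomials from the definition of t-stability rather than by the explicit sequence $f_m$ from Theorem \ref{poly}, which changes nothing of substance.
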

\begin{proof}
It suffices to consider projections of $n$ coordinates onto $n-1$ coordinates. Suppose that $\mu$ is a t-stable measure on $\N^n$. If $f(x_1,\dotsc,x_n)$ is its generating function, notice that the generating function of the projection of $\mu$ onto $\N^{n-1}$ is $f(x_1,\dotsc,x_{n-1},1)$. By Theorem \ref{poly}, it suffices to show that the approximating polynomials $f_m(x_1,\dotsc,x_{n-1},1)$ are stable. But this follows by considering the (complex) stable polynomials $f_m(x_1,\dotsc,x_{n-1},1+i/k)$ and applying Hurwitz's theorem as $k\rightarrow \infty$.
\end{proof}
We can now give an extension of Theorem \ref{NA}.
\begin{thm}\label{NA2}
Suppose $\mu$ is a t-stable probability measure on $\{0,1,2,\dotsc\}^S$. Then $\mu$ is NA.
\end{thm}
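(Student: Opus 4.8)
The plan is to deduce the statement from Theorem \ref{NA}, the binary case on $\{0,1\}^n$, through three successive reductions: from the countable index set $S$ to a finite one, from t-stable measures to finite-support stable measures, and finally from $\{0,1,2,\dotsc\}$-valued coordinates to $\{0,1\}$-valued ones via polarization. For the first reduction I would invoke the standard fact that negative association of $\mu$ follows from that of all its finite-dimensional projections: if $F,G$ are increasing and depend on disjoint coordinate blocks $A,B\subseteq S$, then after truncating to bounded functions one approximates them by functions of finitely many coordinates, so $\int FG\,d\mu\le\int F\,d\mu\int G\,d\mu$ follows once it is known for every finite projection. Since by Proposition \ref{proj} each such projection of a t-stable measure is again t-stable, it suffices to treat t-stable measures on $\N^n$.

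Next I would approximate a t-stable $\mu$ on $\N^n$ by finite-support stable probability measures. Writing $c_\alpha=\mu(\alpha)\ge 0$, the coefficient of $\x^\alpha$ in the polynomial $f_m$ of Theorem \ref{poly} equals $c_\alpha\prod_{i}(m)_{\alpha_i}/m^{\alpha_i}$, a nonnegative quantity increasing to $c_\alpha$; hence $f_m$ is a sub-probability generating function with $f_m(\mathbf 1)\uparrow 1$. For $m$ large the renormalization $\hat f_m:=f_m/f_m(\mathbf 1)$ is therefore the generating function of a finite-support stable probability measure $\mu_m$, and $\mu_m\to\mu$ in total variation by Scheff\'e's lemma. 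As negative association passes to such limits -- immediately for bounded increasing $F,G$, and for unbounded ones after replacing them by $F\wedge K$, $G\wedge K'$ and letting $K,K'\to\infty$ by monotone convergence -- it now suffices to prove NA for each finite-support stable measure.

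The substantive step is the reduction to the binary case by polarization. Let $\nu$ be a stable probability measure on $\{0,1,\dotsc,N\}^n$ with generating polynomial $f_\nu$. By the Grace-Walsh-Szeg\"o theorem (applied in each variable), $Pol_N f_\nu$ is stable; moreover, since $e_k(\mathbf 1)=\binom{N}{k}$ and polarization divides the degree-$k$ coefficient in each variable by $\binom{N}{k}$, the polynomial $Pol_N f_\nu$ has nonnegative coefficients summing to $f_\nu(\mathbf 1)=1$ and is multi-affine in the $nN$ variables $x_{i,j}$ $(1\le i\le n,\ 1\le j\le N)$. Thus it is the generating function of a stable measure $\tilde\nu$ on $\{0,1\}^{nN}$ with coordinates $\xi_{i,j}$, and because setting $x_{i,j}=x_i$ recovers $f_\nu$ we obtain the joint identity $(\eta(1),\dotsc,\eta(n))\stackrel{d}{=}\bigl(\sum_j\xi_{1,j},\dotsc,\sum_j\xi_{n,j}\bigr)$. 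Given increasing $F,G$ on disjoint blocks $A,B$, the lifts $\tilde F(\xi)=F\bigl((\sum_j\xi_{i,j})_{i\in A}\bigr)$ and $\tilde G(\xi)=G\bigl((\sum_j\xi_{i,j})_{i\in B}\bigr)$ are increasing and depend on the disjoint coordinate sets $\{\xi_{i,j}:i\in A\}$ and $\{\xi_{i,j}:i\in B\}$. Applying Theorem \ref{NA} to $\tilde\nu$ and translating the resulting inequality back through the distributional identity yields precisely the NA inequality for $\nu$.

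I expect the polarization transfer to be the main obstacle: the real content lies in recognizing that a stable measure on $\{0,1,\dotsc,N\}^n$ is, at the level of block sums, a stable binary measure on $\{0,1\}^{nN}$ governed by Theorem \ref{NA}, and in verifying that this correspondence respects both monotonicity and the disjointness of coordinate blocks demanded by negative association. The two approximation steps are comparatively routine, the only delicacy being the truncation needed to pass the NA inequality from bounded to general increasing functions under weak limits.
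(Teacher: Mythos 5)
Your proposal is correct and follows essentially the same route as the paper: reduce to finite $S$ via Proposition \ref{proj}, approximate the t-stable measure by the normalized stable polynomials $f_m/f_m(\mathbf 1)$ from Theorem \ref{poly}, polarize via Grace--Walsh--Szeg\"o to obtain a stable measure on $\{0,1\}^{nN}$ whose block sums reproduce the original coordinates, apply Theorem \ref{NA} to the lifted increasing functions on disjoint blocks, and pass to the limit. The only (immaterial) differences are cosmetic: you invoke Scheff\'e and total-variation convergence where the paper uses weak convergence, and you spell out the truncation needed for unbounded increasing functions.
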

\begin{proof}
By the previous proposition and a limiting argument it is sufficient to show the result for measures on $\N^n$. Let $f(\x)$ be the generating function of $\mu$. By definition, $f\in\overline{\G_\R[\x]}$. Let $\{f_N(\x)\}$ be the stable polynomials converging to $f$ as in Theorem \ref{poly}, which we can assume are normalized so that $f_N(1)=1$. Let $\mu_N$ be the respective probability measures on $\{0,1,2,\dotsc,N\}^n$. Hence by the Grace-Walsh-Szeg\"o (GWS) theorem, $Pol_N{f_N}$ is the generating function for a stable measure $\tilde{\mu}_N$ on $\{0,1\}^{nN}$. Let \[\{\zeta_{ij};1\leq i\leq n,0\leq j\leq N\}\] be the coordinates of $\tilde{\mu}_N$, such that $\eta_i=\sum_j \zeta_{ij}$ is the $i$-th coordinate of $\mu_N$. Hence for bounded increasing functions $F$ and $G$ on $\{0,1,2,\dotsc\}^n$, depending on disjoint sets of coordinates, we have \begin{align*}
&\E^{\mu_N} [F(\eta_1,\dotsc,\eta_n)G(\eta_1,\dotsc, \eta_n)]\\
&=\E^{\mu_N}\bigg[F\bigg(\sum_j \zeta_{1j},\dotsc, \sum_j\zeta_{nj}\bigg)G\bigg(\sum_j \zeta_{1j},\dotsc, \sum_j\zeta_{nj}\bigg)\bigg]\\
&\leq \E^{\mu_N}\bigg[F\bigg(\sum_j \zeta_{1j},\dotsc, \sum_j\zeta_{nj}\bigg)\E^{\mu_N}\bigg[G\bigg(\sum_j \zeta_{1j},\dotsc, \sum_j\zeta_{nj}\bigg)\\
&=\E^{\mu_N} F(\eta_1,\dotsc,\eta_n)\E^{\mu_N} G(\eta_1,\dotsc, \eta_n)
\end{align*}
The inequality above follows because $F(x_{11}+\cdots+ x_{1N},\dotsc,x_{n1}+\cdots +x_{nN})$ is an increasing function in the $nN$ variables (similarly with $G$), and the $\zeta_{ij}$ are all negatively associated by Theorem \ref{NA}. The normal convergence of $f_N\rightarrow f$ implies the weak convergence $\mu_N\rightarrow \mu$, concluding the proof.
\end{proof}

We can also characterize all t-stable measures on one coordinate.

\begin{prop}\label{tstable}
A probability measure on $\{0,1,2,\dotsc\}$ is transcendental stable if and only if it has the same distribution as a (possibly infinite) sum of independent Bernoulli random variables and a Poisson random variable.
\end{prop}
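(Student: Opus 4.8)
The plan is to pass to the generating function $f(x)=\sum_k p_k x^k$ and exploit that, in one real variable, t-stability is exactly membership in the Laguerre--P\'olya class $\overline{\G_\R[x]}$, for which the classical Hadamard factorization is available (see \cite{levin}). Concretely, I want to show that a t-stable probability generating function is precisely one of the form
\[
f(x)=x^m\,e^{\lambda(x-1)}\prod_{k}\frac{1+\alpha_k x}{1+\alpha_k},\qquad \lambda\ge 0,\ \alpha_k\ge 0,\ \sum_k\alpha_k<\infty,
\]
since $(1+\alpha_k x)/(1+\alpha_k)$ is the generating function of a Bernoulli$(\alpha_k/(1+\alpha_k))$ variable, $x^m$ represents $m$ independent Bernoulli$(1)$'s, $e^{\lambda(x-1)}$ is Poisson$(\lambda)$, and $\sum_k\alpha_k<\infty$ is exactly the condition making the (possibly infinite) Bernoulli sum converge almost surely.

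For the ``if'' direction I would verify directly that such an $f$ is t-stable. Each truncation $f_N(x)=\bigl(1+\tfrac{\lambda}{N}(x-1)\bigr)^N\prod_{k=1}^N\frac{1+\alpha_k x}{1+\alpha_k}$ is a univariate polynomial all of whose roots are real (the Poisson factor contributes the real root $1-N/\lambda$, the Bernoulli factors contribute $-1/\alpha_k$), hence is real stable. Since $\sum_k\alpha_k<\infty$, the partial Bernoulli products converge locally uniformly, and $\bigl(1+\tfrac{\lambda}{N}(x-1)\bigr)^N\to e^{\lambda(x-1)}$ locally uniformly, so $f_N\to f$ locally uniformly on $\C$; by definition $f\in\overline{\G_\R[x]}$.

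For the converse, let $\mu$ be t-stable, so $f$ is a real entire function in the Laguerre--P\'olya class. I would first record the probabilistic constraints: $p_k\ge 0$, $f(1)=1$, and $f$ entire, so all moments are finite. Nonnegativity of coefficients gives $f(r)>0$ and $f'(r)=\sum_k kp_k r^{k-1}\ge 0$ for $r>0$; in particular $f$ is nondecreasing on $(0,\infty)$ and the maximum modulus is attained on the positive axis. Combined with the Laguerre--P\'olya property (all zeros real) this forces every zero to be real and $\le 0$. Writing the Hadamard product as $f(x)=c\,x^m e^{\beta x-\gamma x^2}\prod_k(1+\alpha_k x)e^{-\alpha_k x}$ with $c,\beta\in\R$, $\gamma\ge 0$, $m\in\N$, $\alpha_k\ge 0$ and $\sum_k\alpha_k^2<\infty$, I then apply the decisive monotonicity lever: with $L(r)=\log f(r)$ on $(0,\infty)$ we have $L'(r)=f'(r)/f(r)\ge 0$, while differentiating the factorization gives
\[
L'(r)=\frac{m}{r}+\beta-2\gamma r-\sum_k\frac{\alpha_k^2 r}{1+\alpha_k r}.
\]
Each summand increases to $\alpha_k$ as $r\to\infty$, so by monotone convergence the sum tends to $\sum_k\alpha_k$. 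Hence if either $\gamma>0$ or $\sum_k\alpha_k=\infty$ then $L'(r)\to-\infty$, contradicting $L'\ge 0$. This forces $\gamma=0$ and $\sum_k\alpha_k<\infty$ simultaneously, after which $L'(r)\to\beta-\sum_k\alpha_k=:\lambda$ and the bound $L'\ge 0$ yields $\lambda\ge 0$. Absorbing the now-summable convergence factors ($\prod_k(1+\alpha_k x)e^{-\alpha_k x}=e^{-x\sum_k\alpha_k}\prod_k(1+\alpha_k x)$) and normalizing by $f(1)=1$ produces exactly the displayed form.

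The hard part is this last step of simultaneously eliminating the Gaussian factor ($\gamma=0$) and the genus-one convergence factors ($\sum_k\alpha_k<\infty$): both are Laguerre--P\'olya-admissible in general, and it is only the probabilistic input---nonnegative coefficients, hence $f'/f\ge 0$ on $(0,\infty)$---that excludes them, via the growth argument above. (As a sanity check, $e^{cx}/\Gamma(x+1)$ is Laguerre--P\'olya with all zeros at the negative integers yet decays superexponentially on the positive axis, so it cannot have nonnegative coefficients; this is precisely the configuration the monotonicity argument rules out.) An alternative route would invoke the Aissen--Schoenberg--Whitney--Edrei classification of P\'olya frequency sequences---noting that the approximants of Theorem \ref{poly} have nonnegative coefficients and hence correspond (by real-rootedness, as in Theorem \ref{sums}) to finite sums of independent Bernoullis---but the self-contained Hadamard/monotonicity argument seems cleaner and keeps the proof within the analytic framework already developed here.
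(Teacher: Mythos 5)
Your proof is correct, and the ``if'' direction coincides with the paper's (truncate the Poisson factor and the Bernoulli product, observe real-rootedness, pass to the normal limit). The converse, however, takes a genuinely different route. The paper observes that the approximants $f_m$ supplied by Theorem \ref{poly} have nonnegative coefficients and are real-rooted, hence have all zeros on the negative real axis; it then invokes the classical representation theorem for normal limits of polynomials with only nonpositive zeros (\cite[VIII, Theorem 1]{levin}), which \emph{already} delivers the genus-zero form $Cx^qe^{\sigma x}\prod_k(1-x/a_k)$ with $\sigma\ge 0$, $a_k<0$ and $\sum|a_k|^{-1}<\infty$ --- so the absence of a Gaussian factor and the summability of the reciprocal zeros come for free from the stronger structure theorem for that subclass. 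You instead start from the general Hadamard factorization of the full Laguerre--P\'olya class (Gaussian factor $e^{-\gamma x^2}$ and genus-one convergence factors permitted, only $\sum\alpha_k^2<\infty$ guaranteed) and then eliminate $\gamma>0$ and $\sum\alpha_k=\infty$ by hand, using the probabilistic input that nonnegative coefficients force $(\log f)'\ge 0$ on $(0,\infty)$ while the factorization makes $(\log f)'(r)\to-\infty$ in either bad case. Your computation of $L'(r)$ and the monotone-convergence step are sound, and the preliminary reduction (all zeros real by the LP property, none positive since $f(r)>0$ for $r>0$) is correctly handled. What your argument buys is self-containedness relative to only the generic Hadamard theorem, plus an explicit identification of exactly where positivity of the coefficients enters (your $e^{cx}/\Gamma(x+1)$ example makes this vivid); what the paper's buys is brevity, at the cost of citing the finer classification of limits of negatively-rooted polynomials. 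Both yield the same normalized product after using $f(1)=1$.
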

\begin{proof}
Suppose $f$ is a t-stable generating function for a non-negative, integer valued random variable. By Theorem \ref{poly}, $f$ is a normal limit of univariate polynomials with all zeros on the negative real axis. An appeal to the classical theory of entire functions (e.g. \cite[VIII, Theorem 1]{levin}) indicates that $f$ can be expressed as the following infinite product: \[f(x) = Cx^q e^{\sigma x}\prod_{k=1}^\infty\left[1-\frac{x}{a_k}\right],\] for some $q\in \N$, $\sigma\geq 0$, $a_k<0$, and $\sum|a_k|^{-1}<\infty$. A little rearrangement -- using the fact that $f(1)=1$ -- gives the following alternative expression:\[f(x)=x^qe^{\sigma(x-1)}\prod_{k=1}^\infty[(1-p_k)+xp_k],\] where $p_k = 1/(1-a_k)$. This we recognize as the generating function for the sum of a non-negative constant $q$, independent Poisson($\sigma$) and Bernoulli($p_k$) random variables. Conversely, any generating function of this form with $\sum p_k<\infty$ is automatically t-stable, as $e^x$ is the normal limit of the polynomials $(1+x/n)^n$.
\end{proof}
By projecting onto finite subsets of coordinates (taking limits if need be) and setting all variables in the resulting generating function to be equal, we obtain the following extension of Theorem \ref{sums}:
\begin{cor}
Suppose $\mu$ is a t-stable measure on $\N^S$. Then for any $T\subset S$ the number of particles located in $T$ -- according to $\mu$ -- has the distribution of a sum of independent Bernoulli and Poisson random variables.
\end{cor}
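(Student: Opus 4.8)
The plan is to reduce everything to the one-coordinate characterization of Proposition \ref{tstable}, treating finite $T$ directly by a diagonal substitution and infinite $T$ by a weak-limit argument resting on Corollary \ref{strongcor}.

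First I would dispose of the case where $T$ is finite, say $T=\{1,\dotsc,n\}$ after relabeling. Projecting $\mu$ onto the coordinates indexed by $T$ yields a t-stable measure on $\N^n$ by Proposition \ref{proj}; write $g(x_1,\dotsc,x_n)\in\overline{\G_\R[\x]}$ for its generating function. The number of particles in $T$ is $N_T=\sum_{i=1}^n\eta(i)$, and its univariate generating function is exactly the diagonal restriction
\[ \E\big[x^{N_T}\big]=\E\big[x^{\eta(1)}\cdots x^{\eta(n)}\big]=g(x,x,\dotsc,x). \]
The one step that genuinely needs checking is that this restriction $h(x):=g(x,\dotsc,x)$ remains t-stable in a single variable. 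This holds because the substitution $x_1=\cdots=x_n=x$ preserves stability: if $\{g_m\}\subset\G_\R[\x]$ are the approximating real stable polynomials supplied by Theorem \ref{poly}, then each $g_m(x,\dotsc,x)$ is a real stable univariate polynomial — it cannot vanish for $x\in\H$, since $x\in\H$ forces $(x,\dotsc,x)\in\H^n$ — and $g_m(x,\dotsc,x)\to h(x)$ normally on $\C$. Hence $h\in\overline{\G_\R[x]}$, and Proposition \ref{tstable} identifies $N_T$ as a sum of independent Bernoulli and Poisson random variables, which is the finite-$T$ form of the claim.

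For infinite $T$ I would exhaust it by finite subsets $T_1\subset T_2\subset\cdots$ with $\bigcup_k T_k=T$, so that $N_{T_k}\uparrow N_T$ almost surely and hence in distribution. By the finite case each $N_{T_k}$ is a sum of independent Bernoulli and Poisson variables, so its law is t-stable on $\N$; Corollary \ref{strongcor} then transfers t-stability to the limiting law of $N_T$, and a final appeal to Proposition \ref{tstable} produces the decomposition, now as a possibly infinite Bernoulli sum together with a Poisson term.

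I expect the only delicate point to be this infinite case, specifically the behavior of the parameters in the limit: the infinite-product representation underlying Proposition \ref{tstable} must survive the passage $k\to\infty$, including the degenerate possibility that $N_T=+\infty$ almost surely when the Bernoulli parameters sum to infinity. The diagonal-restriction step, by contrast, is routine once one notes that $\H$ maps into the diagonal of $\H^n$, and the remaining bookkeeping — relabeling, the normalization $g(1,\dotsc,1)=1$, and monotone convergence of $N_{T_k}$ — is standard.
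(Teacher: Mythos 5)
Your argument is correct and is essentially the paper's own proof: the paper disposes of this corollary in one sentence ("By projecting onto finite subsets of coordinates (taking limits if need be) and setting all variables in the resulting generating function to be equal..."), which is exactly your combination of Proposition \ref{proj}, the diagonal substitution $x_1=\cdots=x_n=x$ (justified, as you note, by $\H$ mapping into the diagonal of $\H^n$ plus Hurwitz), Corollary \ref{strongcor} for the exhaustion of infinite $T$, and Proposition \ref{tstable} to read off the Bernoulli--Poisson decomposition. You have merely filled in the details the paper leaves implicit.
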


\subsection{Markov processes and stability}
Suppose $\{\eta_t;t\geq 0\}$ is a Markov process on $\N^n$. 
We define the associated linear operator $T_t$ on power series with bounded coefficients by letting $T_t(\x^\alpha)$ be the generating function of $\{\eta_t|\eta_0=\alpha\}$ for each $\alpha\in\N^n$, and extending by linearity. This is well-defined because $\sum_{k\geq 0} P(\eta_t=k)=1$.
\begin{defn}[Preservation of stability]\label{preserve} We say that a Markov process $\{\eta_t;t\geq 0\}$ on $\N^n$ {\it preserves stability} if for any stable initial distribution, the distribution at any later time is t-stable. That is, the associated linear operator $T_t$ maps the set of stable polynomials with non-negative coefficients into the set of t-stable power series.
  
The process $\eta_t$ {\it preserves t-stability} if for any t-stable initial distribution, the distribution at a later time is again t-stable. That is, $T_t$ maps the set of t-stable power series with non-negative coefficients into itself.
\end{defn}
In fact, these two definitions are equivalent.
\begin{prop}\label{prop1}
A Markov process preserves t-stability if and only if it preserves stability.
\end{prop}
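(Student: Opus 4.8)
The plan is to prove the two implications separately, one being essentially immediate and the other requiring an approximation argument. For the easy direction, preservation of t-stability trivially implies preservation of stability: if $g$ is a stable polynomial with non-negative coefficients, then after dividing by $g(1,\dotsc,1)>0$ it is the generating function of a stable probability measure, which is in particular t-stable (as already noted, a stable measure is automatically t-stable). Since $T_t$ is linear and multiplication by a positive constant preserves t-stability, the hypothesis gives that $T_t g$ is t-stable, so the process preserves stability.

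The substance is the converse. Suppose the process preserves stability, and let $f(\x)=\sum_\alpha c_\alpha\x^\alpha\in\overline{\G_\R[\x]}$ be the generating function of a t-stable probability measure $\mu$, so that $c_\alpha\geq 0$ and $f(1,\dotsc,1)=1$. By Theorem \ref{poly} the polynomials $f_m(\x)=\sum_{\alpha\leq\beta_m}(\beta_m)_\alpha c_\alpha\left(\frac{\x}{m}\right)^\alpha$ are stable and converge to $f$ uniformly on compact sets. Because $(\beta_m)_\alpha\geq 0$ and $c_\alpha\geq 0$, each $f_m$ has non-negative coefficients; and since $f_m(1,\dotsc,1)\to f(1,\dotsc,1)=1$ we may normalize $\tilde f_m:=f_m/f_m(1,\dotsc,1)$ to obtain, for $m$ large, stable polynomials with non-negative coefficients summing to one -- that is, generating functions of genuine stable measures $\tilde\mu_m$. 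These still converge coefficientwise to $f$, so by Scheff\'e's lemma $\tilde\mu_m\to\mu$ in total variation. Applying the hypothesis, preservation of stability yields that each $T_t\tilde f_m$ is t-stable.

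It remains to pass to the limit. Writing $P_t$ for the Markov evolution of the process, $T_t\tilde f_m$ is the generating function of the evolved measure $\tilde\mu_m P_t$; since a Markov kernel is a contraction in total variation, $\tilde\mu_m P_t\to\mu P_t$ in total variation, whence the coefficients of $T_t\tilde f_m$ converge to those of $T_t f$. Corollary \ref{strongcor} then forces $T_t f\in\overline{\G_\R[\x]}$, so the process preserves t-stability. The step I expect to be the main obstacle is precisely this interchange of the limit $m\to\infty$ with the infinite summation concealed in $T_t$: a priori $T_t$ acts only on power series with bounded coefficients, and mere coefficientwise convergence of the $\tilde f_m$ gives no control on $T_t\tilde f_m$. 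The total-variation estimate -- available exactly because the normalization produces honest probability measures, so that Scheff\'e's lemma applies -- is what supplies the uniform control needed to legitimize the exchange.
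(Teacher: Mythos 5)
Your proof is correct and follows essentially the same route as the paper: approximate $f$ by the stable polynomials $f_m$ of Theorem \ref{poly}, apply the stability-preservation hypothesis to each, and pass to the limit via Corollary \ref{strongcor}. The only cosmetic difference is that you justify the coefficientwise convergence of $T_t f_m$ by normalizing to probability measures and invoking Scheff\'e plus the total-variation contractivity of the Markov kernel, whereas the paper gets the same interchange directly by dominated convergence using $\sum_\beta d_{\alpha,\beta}=1$ and $\sum_\alpha c_\alpha^{(m)}\leq 1$.
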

\begin{proof}
Only one direction needs proof. Assume the process preserves stability. Let $T_t$ be the associated linear operator, and $f=\sum_\alpha c_\alpha \x^\alpha$ be the generating function of a t-stable distribution; hence $c_\alpha\geq 0$ for all $\alpha$ and $\sum_\alpha c_\alpha = 1$. By Theorem \ref{poly} there are stable polynomials $f_n=\sum_{\alpha}c^{(n)}_\alpha \x^\alpha$ with $c_\alpha^{(n)}\rightarrow c_\alpha$, all $c_\alpha^{(n)}\geq 0$, and with $\sum_\alpha c_\alpha^{(n)}\leq 1$. Suppose that \[T_t(\x^\alpha) = \sum_\beta d_{\alpha,\beta}\x^\beta.\] Since probability is conserved, $\sum_\beta d_{\alpha,\beta}=1$, and hence by dominated convergence, \[\sum_{\alpha}c_\alpha^{(n)}d_{\alpha,\beta}\longrightarrow\sum_\alpha c_\alpha d_{\alpha,\beta}\quad \mbox{as }n\rightarrow\infty.\] In other words, the coefficients of $T_tf_n$ (which is t-stable by assumption), converge to the coefficients of $T_t f$. $T_t f$ is then t-stable by Corollary \ref{strongcor}.
\end{proof}
We now give a couple examples.
\subsection{Independent Markov chains}
\noindent Suppose $\{X_t(1),X_t(2),\dotsc\}$ is a collection of independent Markov chains on $S$ with identical jump rates. Set \[\eta_t(x) = \sum_{i\geq 1} 1_{\{X_t(i)=x\}},\] so that the resulting process is a collection of particles on $S$ jumping independently with the same rates. This is well defined as long as $\eta_t(x)<\infty$ for all $x\in S,\ t\geq 0$ -- one possibility is to restrict initial configurations to the space $E_0$ defined below for reaction-diffusion processes.
\begin{prop}\label{indep}
The process $\{\eta_t;t\geq0\}$ preserves t-stability. Hence, assuming that the initial distribution is t-stable, the distribution at any time is negatively associated by Theorem \ref{NA2}.
\end{prop}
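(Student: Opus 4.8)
The plan is to reduce, using Proposition \ref{prop1}, to the claim that the process preserves stability, and then to observe that the independence of the chains forces the transition operator $T_t$ to act as a linear substitution of variables, under which stability is essentially automatic. Writing $p_t(x,y)$ for the common one-particle transition probabilities on $S$, a single particle started at $x$ occupies $y$ at time $t$ with probability $p_t(x,y)$ and so contributes the variable $x_y$ to the generating function; since the particles move independently, starting from the configuration $\alpha$ (with $\alpha_x$ particles at $x$) yields
\[
T_t(\x^\alpha)=\prod_{x}\Big(\sum_{y\in S}p_t(x,y)\,x_y\Big)^{\alpha_x}.
\]
Thus $T_t f=f\circ\Phi$, where $\Phi=(\phi_x)_{x}$ and $\phi_x(\x)=\sum_{y\in S}p_t(x,y)\,x_y$ is a nonnegative linear combination of the coordinates. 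When $S$ is finite each $\phi_x$ maps $\H^S$ into $\H$ (its row sums to $1$), so $f\circ\Phi$ is stable outright; the work is entirely in the countable case.

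For a stable polynomial $f$, which involves only finitely many variables indexed by some finite $U\subset S$, I would verify that $T_t f$ is t-stable by checking, in the spirit of Proposition \ref{proj}, that each projection onto a finite set $T\subset S$ is t-stable. Setting $x_y=1$ for $y\notin T$ converts $\phi_x$ into the affine map $\phi_x^T(\x)=\sum_{y\in T}p_t(x,y)\,x_y+c_x$ with $c_x=1-\sum_{y\in T}p_t(x,y)\ge 0$, and the projection of $T_t f$ becomes $f\big((\phi_x^T)_{x\in U}\big)$, a polynomial in the $|T|$ variables $\{x_y:y\in T\}$. Since $\Im \phi_x^T=\sum_{y\in T}p_t(x,y)\,\Im(x_y)\ge 0$, the substitution $\Phi^T$ carries $\H^T$ into the \emph{closed} upper half-space over $U$, and because $f$ is nonvanishing on the open $\H^U$ one would like to conclude nonvanishing of the composition.

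The main obstacle is precisely this boundary: if a particle started at $x$ cannot reach $T$ by time $t$, i.e. $\sum_{y\in T}p_t(x,y)=0$, then $\phi_x^T$ collapses to the real constant $c_x=1$, so $\Phi^T$ may land on the boundary of $\H^U$, where stability of $f$ gives no information. I would circumvent this exactly as in the proof of Proposition \ref{proj}: perturb by taking $x_y=1+i/k\in\H$ in place of $x_y=1$ for $y\notin T$, which replaces the constant term by $(1+i/k)c_x$ and makes
\[
\Im\Big(\sum_{y\in T}p_t(x,y)\,x_y+(1+i/k)c_x\Big)=\sum_{y\in T}p_t(x,y)\,\Im(x_y)+\tfrac{1}{k}\,c_x>0
\]
on $\H^T$, using $\sum_{y\in T}p_t(x,y)+c_x=1>0$. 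The resulting composition is then a stable polynomial of degree at most $\deg f$, independent of $k$, and letting $k\to\infty$ it converges normally to $f\big((\phi_x^T)_{x\in U}\big)$; Hurwitz's theorem (Theorem \ref{hurwitz}) forces the limit to be stable or identically zero, and as a genuine probability generating function (value $1$ at $\x_T=\mathbf 1$) it is the former, hence t-stable. Since every finite projection of $T_t f$ is t-stable, $T_t f$ is t-stable, so the process preserves stability; Proposition \ref{prop1} promotes this to preservation of t-stability, and negative association is then immediate from Theorem \ref{NA2}.
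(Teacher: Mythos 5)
Your argument is correct, but it takes a genuinely different route from the paper's. You compute the transition operator in closed form --- $T_t(\x^\alpha)=\prod_x\big(\sum_y p_t(x,y)\,x_y\big)^{\alpha_x}$, so that $T_t$ acts by substituting each variable with a convex combination of the variables --- and then verify stability of the composition directly, handling the degenerate case $\sum_{y\in T}p_t(x,y)=0$ (where the substituted value lands on the boundary of $\H$) with the same $1+i/k$ perturbation and Hurwitz argument used in Proposition \ref{proj}; this is the right fix and the nonvanishing conclusion via $f(\mathbf 1)=1$ is sound. The paper never writes down $T_t$ for the full chain: it first reduces to finite $S$ by restricting jumps to an exhausting sequence $S_n\nearrow S$, verifies preservation of stability only for the elementary process allowing a single jump $i\mapsto j$ (where the substitution $x_i\mapsto px_j+(1-p)x_i$ is transparent), and then reassembles the full dynamics with Trotter's product formula together with Corollary \ref{strongcor}. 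Your version is more elementary and self-contained --- no semigroup machinery --- and it makes visible that $T_t$ carries point masses into the class $\m M$ discussed in the Remark following the proposition; the paper's decomposition-plus-Trotter template is what generalizes to the reaction-diffusion setting of Proposition \ref{reaction}, where no closed form for $T_t$ is available. One point to tighten: Proposition \ref{prop1} is stated and proved for $\N^n$, so to pass from stable polynomial initial data in finitely many variables to arbitrary t-stable initial measures on $\N^S$ with $S$ countably infinite you still need the paper's preliminary approximation by processes on finite $S_n$ (or an equivalent coefficientwise limit via Corollary \ref{strongcor}); this is routine but should be stated.
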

\begin{proof}
Let $\mu_t$ be the distribution of $\eta_t$, with $\mu_0$ t-stable. We need to show that for each finite $T\subset S$, the projection $\mu_t\vert_T$ is t-stable. Taking finite $T\subset S_1\subset S_2\subset \cdots$ with each $S_n$ finite and $S_n\nearrow S$, we can approximate $\mu_t\vert_T$ by the sequence $\mu^{(n)}_t\vert_T$, with each $\mu^{(n)}_t$ the distribution of the independent Markov chain process on $S_n$ given initial distribution $\mu_0\vert_{S_n}$ and jumps restricted to staying inside $S_n$. Hence by Corollary \ref{strongcor} we can assume finite $S$.

Suppose now that $S=[n]$, and only jumps from site $1$ to site $2$ are allowed. In this case, assuming a jump rate $q(1,2)$, each particle at $x$ independently has probability $p:=1-e^{-tq(1,2)}$ of moving to $y$. Hence the associated linear operator $T_t$ takes \[x_1^{\alpha_1}x_2^{\alpha_2}\cdots x_n^{\alpha_n}\mapsto (px_2+(1-p)x_1)^{\alpha_1}x_2^{\alpha_2}\cdots x_n^{\alpha_n},\] that is,
\[T_tf(x_1,\dotsc,x_n)=f(px_2+(1-p)x_1,x_2,\dotsc,x_n),\] which preserves the class of stable polynomials.
By permuting variables, this argument also holds for general $i,j\in [n]$.

Recalling the Banach space $C_0(\N^n)$ of functions that vanish at infinity, consider the strongly continuous contraction semi-groups $S^{i,j}(t)$ on $C_0(\N^n)$ defined by \[S^{i,j}(t)f(\eta) = \E^\eta f(\eta^{i,j}_t),\] where $\{\eta^{i,j}_t;t\geq0\}$ is the (Feller) process of independent Markov chains which only allow jumps from site $i$ to site $j$. Then for a t-stable intial distribution $\mu$, we just showed that $\mu S^{i,j}(t)$ -- the distribution of $\eta^{i,j}_t$ assuming initial distribution $\mu$ -- is again t-stable. By Trotter's product theorem \cite[p. 33]{ethierkurtz}, the process allowing the jumps $\{i\mapsto j\},\{k\mapsto l\}$ has semigroup 
\[S(t)= \lim_{n\rightarrow\infty}\Big[S^{i,j}\Big(\frac tn\Big)S^{k,l}\Big(\frac tn\Big)\Big]^n.\] 
Hence by Corollary \ref{strongcor}, $\mu S(t)$ is again t-stable. Including all the possible jumps one-by-one, we conclude that the whole process $\{\eta_t;t\geq 0\}$ on $\N^n$ preserves t-stability. 
\end{proof}

\begin{rem}Let $\m M$ be the set of probability measures on $\N^S$ described by random configurations $\eta$ with coordinates \[\eta(k)=\sum_{i} 1_{\{Y_i=k\}},\] where $Y_1,Y_2,\dotsc$ are independent random variables with values in $S\cup\{\infty\}$. In \cite{liggettNA} it was shown that $\m M$ is preserved by the process of independent Markov chains, and that measures in $\m M$ are NA. Proposition \ref{indep} is a generalization of this result, since it is easily checked that the class $\m M$ is contained in the class of t-stable measures. Indeed, if $\mu\in \m M$ and $S=[n]$, then $\mu$ has a generating function of the form 
\begin{align*}
&\E x_1^{\sum_i 1_{\{Y_i=1\}}}\cdots x_n^{\sum_i 1_{\{Y_i=n\}}}\\
&=\prod_i[P(Y_i=\infty)+P(Y_i=1)x_1+\cdots+P(Y_i=n)x_n],
\end{align*}
by the independence of the $Y_i's$. Furthermore, (non-constant) product measures for which each coordinate is a sum of independent Bernoulli and Poisson measures are t-stable, but are not contained in the class $\m M$.
\end{rem}

\subsection{Reaction-diffusion processes}
In addition to having the motion of particles following independent Markov chains, we can also allow particles to undergo a reaction at each site. Let $p(i,j)$ be transition probabilities for a Markov chain on $S$. Given a state $\eta\in \N^S$, we consider the following evolution:
\begin{enumerate}
\item at rate $\beta^i_{\eta(i)}$ a particle is created at site $i$,
\item at rate $\delta^i_{\eta(i)}$ a particle at site $i$ dies.
\item at rate $\eta(i)p(i,j)$ a particle at site $i$ jumps to site $j$.
\end{enumerate}
The most common example is the {\it polynomial model} of order $m$, where the birth-death rates for each site are \[\beta_k=\sum_{j=0}^{m-1}b_j k(k-1)\cdots (k-j+1),\quad \delta_k=\sum_{j=1}^m d_j k(k-1)\dotsc(k-j+1).\]
Reaction-diffusion processes originated as a model for chemical reactions \cite{schlogl}, and subsequent work by probabilists has focused on the ergodic properties \cite{ding, chen, athreya}. 

To construct the process, we require a strictly positive sequence $k_i$ on the index set $S$, and a positive constant $M$ such that \[\sum_j p(i,j)k_j\leq M k_i,\quad i\in S.\] 
Furthermore, the birth rates must satisfy \[\sum_i \beta^i_0 k_i<\infty.\]
Then we take the state space of the process to be \[E_0=\{\eta\in \N^S: \sum_i \eta(i)k_i<\infty\}.\] See \cite[chapter 13.2]{chen} for the details of the construction.

In a very simple case we have preservation of t-stability:
\begin{prop}\label{reaction}
Suppose the reaction-diffusion process on $\N^S$ is well-constructed with $\beta^i_k=b^i$ and $\delta^i_k=d^i k$ -- this is the polynomial model of order $1$ and site-varying reaction rates. Then the process preserves t-stability, and hence -- assuming a t-stable initial configuration -- its distribution at any time is negatively associated.
\end{prop}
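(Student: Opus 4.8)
The plan is to follow the strategy of Proposition \ref{indep}: reduce to a finite site set, split the generator into elementary pieces via Trotter's product formula, and check that each piece preserves t-stability. First I would reduce to finite $S$. Given finite $T\subset S$, I would approximate $\mu_t|_T$ by the analogous reaction-diffusion processes on an increasing sequence of finite sets $S_n\nearrow S$, with rates and jumps restricted to $S_n$, and invoke Corollary \ref{strongcor} together with Proposition \ref{proj} to pass to the limit. Thus it suffices to treat $S=[n]$, where the generator decomposes as a sum of a diffusion part, a pure-birth part $\sum_i \Omega^i_{\mathrm{birth}}$, and a linear-death part $\sum_i \Omega^i_{\mathrm{death}}$.

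The diffusion part is already covered: since particles move as independent Markov chains, the corresponding semigroup preserves t-stability by Proposition \ref{indep}. For the birth part, the constant birth rate $b^i$ means that, started from $\alpha$ particles at site $i$, the count at time $t$ is $\alpha+\mathrm{Poisson}(b^i t)$; hence the associated operator multiplies a generating function $f$ by $\prod_i e^{b^i t(x_i-1)}$. This factor lies in $\overline{\G_\R[\x]}$, since each $e^{b^i t x_i}$ is the normal limit of the stable polynomials $(1+b^i t x_i/m)^m$. Because a product of stable polynomials is stable, the class $\overline{\G_\R[\x]}$ is closed under multiplication: the products $f_k g_k$ of approximating stable polynomials converge normally to $fg$. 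Therefore multiplication by the birth factor preserves t-stability.

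For the death part, linear death at rate $d^i$ per particle means each of the $\alpha$ particles at site $i$ survives to time $t$ independently with probability $p_i=e^{-d^i t}$, so the count is $\mathrm{Binomial}(\alpha,p_i)$ and the operator acts on generating functions by the substitution $x_i\mapsto (1-p_i)+p_i x_i$. This is an affine change of variables with positive real slope, which maps $\H^n$ into itself; consequently it carries stable polynomials to stable polynomials, and by passing to normal limits (Theorem \ref{poly}) it preserves t-stability as well.

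Finally I would assemble the pieces. Using the same Trotter product argument as in Proposition \ref{indep} -- realizing each elementary evolution as a strongly continuous contraction semigroup on $C_0(\N^n)$ and writing the full semigroup as a limit of products $\big[\prod_k S_k(t/N)\big]^N$ -- Corollary \ref{strongcor} then yields that the combined process preserves t-stability, and negative association follows from Theorem \ref{NA2}. I expect the main obstacle to be the analytic bookkeeping in this last step rather than any single computation: one must verify that the generator genuinely splits into these elementary pieces on the state space $E_0$ and that the (unbounded) birth and death generators fit into the Trotter framework, which is exactly the care that the well-constructedness hypothesis is designed to supply. The closure of $\overline{\G_\R[\x]}$ under multiplication, used for the birth term, is the one genuinely new ingredient and is worth isolating as a small lemma.
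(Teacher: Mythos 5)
Your proposal matches the paper's proof essentially step for step: reduction to finitely many sites, the same decomposition into three elementary pieces (diffusion handled by Proposition \ref{indep}, constant birth as multiplication of the generating function by $e^{b^i t(x_i-1)}$, linear death as the affine substitution $x_i\mapsto(1-e^{-d^i t})+e^{-d^i t}x_i$), and reassembly via Trotter's product formula together with Corollary \ref{strongcor}. The only cosmetic differences are that you isolate closure of $\overline{\G_\R[\x]}$ under multiplication as a separate lemma where the paper asserts the birth step directly, and that the paper pins down the Trotter step by citing the Feller property of the finite-site process (via \cite{ethierkurtz}) and Chen's construction for the finite-site approximation.
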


\begin{proof}
The strategy here is the same as with Proposition \ref{indep}. To reduce to a reaction-diffusion process on a finite number of sites, we approximate using the construction in \cite[Theorem 13.8]{chen}. Furthermore, on the locally compact space $\N^n$, the reaction-diffusion process with at most constant birth and linear death rates is now a Feller process, as can be seen from \cite[Theorem 3.1, Ch. 8]{ethierkurtz}. Hence by Trotter's product formula and Corollary \ref{strongcor} we only need to show that the following processes preserve stability on $\N^n$:
\begin{enumerate}
\item constant birth rate $b^i$ at a single site $i$.
\item linear death rates at a single site $i$ ($\delta^i_k=d^i k$),
\item jumps from site $i$ to site $j$ at rate $\eta(i)p(i,j)$
\end{enumerate}

For (1), we note that with a constant birth rate, at time $t$ a Poisson$(b^i t)$ number of particles has been added to the system -- i.e. the original generating function is multiplied by $e^{b^i t(x_i-1)}$, preserving t-stability.

(2) can be thought of as the process in which each particle at site $i$ dies independently at rate $d^i$. Hence the associated linear transform is defined by \[T_t(x_1^{\alpha_1}\cdots x_i^{\alpha_i}\cdots x_n^{\alpha_n}) = x_1^{\alpha_1}\cdots[1-e^{-d^i t}+e^{-d^i t}x_i]^{\alpha_i}\cdots x_n^{\alpha_n}.\] As the affine transformation $x\mapsto ax+(1-a)$, ($a>0$) maps the upper half plane onto itself, $T_t$ preserves preserves the class of stable polynomials.

Finally, we note that (3) was already seen to preserve stability from the proof of Proposition \ref{indep} for independent Markov chains.
\end{proof}
Other reaction-diffusion processes do not preserve stability in general. On one coordinate, a reaction-diffusion process is just a birth-death chain, so by Theorem \ref{BD1} the only possible generalization would be to quadratic death rates. In this case, unfortunately, the associated linear transformation $T_t$ will {\it not} preserve stable polynomials with positive roots; indeed, assuming death rates $\delta_k=k(k-1)$, quadratic polynomials with a double root inside the interval $(0,1)$ will not be mapped to stable polynomials under $T_t$. A much more complicated example -- which we shall not reproduce here -- shows that quadratic death rates on multiple sites does not preserve the class of stable probability measures.

\section{Birth-Death Chains}
Our goal in this section is to prove Theorem \ref{BD1}. As just noted above, in the case of quadratic death rates the associated linear transformation does not preserve all polynomials with real zeros, and hence we cannot rely on the classification theory in \cite{BBpolyaschur}. From the probabilistic point of view, it would be useful to have a similar theory for linear transformations on polynomials with positive coefficients; however, in what follows we will make do with several perturbation arguments -- the main idea being that a polynomial's roots move continuously under changes to its coefficients.

Recall that a (continuous-time) birth-death chain $\{X_t;t\geq 0\}$ is a Markov process on the non-negative integers with transitions \[k\mapsto k+1 \quad\mbox{at rate }\beta_k,\quad k\mapsto k-1 \quad\mbox{at rate }\delta_k, \quad \mbox{with }\delta_0=0.\]

We only consider rates $\{\beta_k,\delta_k\}$ such that the process does not blow up in finite time -- see Chapter 2 of \cite{Li} for the necessary and sufficient conditions, as well as for the construction of the process. 

Because of our definition of ``preserving stability'' (Definition \ref{preserve}), Theorem \ref{BD1} and the auxiliary results below only make sense for birth-death chains with infinitely many non-zero rates (e.g. when the chain is irreducible on $\N$). Indeed, if one fixes $n$ and considers rates $\beta_k=n-k$ and $\delta_k=k$ for $0\leq k\leq n$, with all other rates zero, then it can be seen that this process preserves all stable polynomials of degree at most $n$.

The generating function at time $t$ is given by
$$\phi(t,z)=\sum_{k=0}^\infty P(X_t=k)z^k.$$ 

By Theorem 2.14 of \cite{Li}, the transition probabilities \[p_t(j,k)=P(X_t=k|X_0=j)\] are continuously differentiable in $t$
and satisfy the Kolmogorov backward equations
$$\frac d{dt}p_t(j,k)=\beta_jp_t(j+1,k)+\delta_jp_t(j-1,k)-(\beta_j+\delta_j)p_t(j,k).$$
By Theorem 2.13 of \cite{Li}, $|p_t(j,k)-p_s(j,k)|\leq 1-p_{|t-s|}(j,j).$ It follows that
$$\frac{\partial}{\partial t}\phi(t,z)=\sum_{k=0}^\infty\frac d{dt}P(X_t=k)z^k$$
for $|z|<1$, provided that $X(0)$ is bounded. Iterating this argument, one sees that $\phi(t,x)$ is $C^2$ on $[0,\infty)\times
\{z:|z|<1\}$ if $X(0)$ is bounded.

\begin{prop}\label{onlyif} Suppose the birth-death process $\{X_t;t\geq 0\}$ preserves stability. Then there exist constants $b_0,b_1,b_2,d_1,d_2$ so that
\[\beta_k=b_0+b_1k+b_2k^2\quad\text{and}\quad\delta_k=d_1k+d_2k^2.\]
\end{prop}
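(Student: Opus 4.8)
The plan is to linearize the preservation property at $t=0$, feed in polynomials with a triple real root, and show that the rigidity of a triple root forces an exact identity on the rates — one that says precisely that $(\beta_k)$ and $(\delta_k)$ have vanishing third differences.

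First I would make the infinitesimal generator explicit. Fix a real stable polynomial $f(z)=\sum_k f_kz^k$ with non-negative coefficients; since $f$ has finite support, $X(0)$ is bounded, and the $C^2$ regularity of $\phi(t,z)$ on $[0,\infty)\times\{|z|<1\}$ recorded above applies. Differentiating $\phi(t,z)=\sum_j f_j\sum_k p_t(j,k)z^k$ at $t=0$ and using the backward equations (with $p_0(j,k)=1_{\{j=k\}}$) gives
\[
(Af)(z):=\left.\frac{\partial}{\partial t}\phi(t,z)\right|_{t=0}=\sum_k f_k\big[\beta_k(z^{k+1}-z^k)+\delta_k(z^{k-1}-z^k)\big],
\]
so that $\phi(t,z)=f(z)+t\,(Af)(z)+o(t)$, uniformly on a neighborhood of any fixed point of $(-1,0)$.

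The heart of the argument is a perturbation of a triple root. A double root would split into a conjugate pair only under a one-sided sign condition, but a triple root is rigid: its three branches spread at $120^\circ$, so stability cannot survive unless the first-order term vanishes there. Concretely, fix $a\in(0,1)$ and $m\in\N$ and take $f(z)=(z+a)^3z^m$: this is real stable with non-negative coefficients and has a zero of exact multiplicity three at $x_0=-a\in(-1,0)$, with $f'''(x_0)\neq0$. Because the process preserves stability, $\phi(t,\cdot)\in\overline{\G_\R}$ for every $t$ and so has only real zeros. By Hurwitz's theorem, for small $t$ the function $\phi(t,\cdot)$ has exactly three zeros near $x_0$; writing $w=z-x_0$,
\[
\phi(t,x_0+w)=\tfrac16 f'''(x_0)\,w^3+t\,(Af)(x_0)+O(w^4)+o(t).
\]
If $(Af)(x_0)\neq0$, these three zeros are asymptotic to the cube roots of $-6t\,(Af)(x_0)/f'''(x_0)$, two of which are non-real, placing a zero in $\H$ and contradicting stability. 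Hence $(Af)(-a)=0$ for every $a\in(0,1)$ and every $m$. I expect this infinitesimal step to be the main obstacle: one must control the remainder $o(t)$ uniformly in $w$, invoke Hurwitz/Rouch\'e to pin the local zero count at three, and keep $x_0$ inside the disc $\{|z|<1\}$ where $\phi$ is analytic — which is exactly why I restrict to $a\in(0,1)$.

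It then remains to unwind the identity $(Af)(-a)=0$. Evaluating the single-monomial case gives $(Az^k)(-a)=c_k(a)$ with
\[
c_k(a)=(1+a)(-a)^{k-1}\big(\delta_k+a\beta_k\big),
\]
and expanding $(z+a)^3z^m$ yields
\[
(Af)(-a)=c_{m+3}(a)+3a\,c_{m+2}(a)+3a^2c_{m+1}(a)+a^3c_m(a).
\]
Factoring out the non-vanishing $(1+a)(-a)^{m-1}a^3$ collapses the condition to
\[
\big(\delta_m-3\delta_{m+1}+3\delta_{m+2}-\delta_{m+3}\big)+a\big(\beta_m-3\beta_{m+1}+3\beta_{m+2}-\beta_{m+3}\big)=0
\]
for all $a\in(0,1)$. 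Since the two bracketed quantities do not depend on $a$, both must vanish for every $m$; that is, the third finite differences of $(\beta_k)$ and $(\delta_k)$ are identically zero, so each sequence is a polynomial in $k$ of degree at most two. Writing these out and using $\delta_0=0$ to remove the constant term of $\delta$ gives $\beta_k=b_0+b_1k+b_2k^2$ and $\delta_k=d_1k+d_2k^2$, as claimed.
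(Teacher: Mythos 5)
Your proof is correct, and while it shares the paper's underlying skeleton -- force $\partial_t\phi(0,\cdot)$ to vanish at a point where the initial polynomial has a triple root, then read off that the third differences of $(\beta_k)$ and $(\delta_k)$ vanish -- both halves are executed by a genuinely different mechanism. The paper first analyzes a \emph{double} root $w$: real-rootedness for $t>0$ forces the inequality $\partial_t\phi(0,w)\big/\partial_z^2\phi(0,w)\le 0$, and since $\partial_z^2\phi(0,w)=2c\prod_{k\ge3}(w-z_k)$ changes sign as $z_3$ crosses $w$, continuity forces $\partial_t\phi(0,w)=0$ exactly at a triple root; it then extracts the rate identity from a general expansion in elementary symmetric functions of arbitrary root configurations. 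You instead go straight at the triple root: under the rescaling $w\asymp t^{1/3}$ the local model is $\tfrac16 f'''(x_0)w^3+t(Af)(x_0)=0$, whose solutions are $t^{1/3}$ times the three cube roots of a nonzero number, two of them non-real -- a clean Newton-polygon argument that yields the vanishing in one stroke -- and then the explicit one-parameter family $(z+a)^3z^m$, $a\in(0,1)$, isolates the third differences with much lighter algebra. The trade-off is that the paper's route also produces the double-root inequality (a necessary condition of independent interest) and avoids fractional-power rescaling, while yours is shorter and more self-contained. Two small points to tighten: your displayed expansion of $\phi(t,x_0+w)$ silently drops the cross term $t\,[(Af)(x_0+w)-(Af)(x_0)]=t\,O(w)$, which is harmless only because in the relevant regime $w=O(t^{1/3})$ it is $O(t^{4/3})=o(t)$ -- say so; and the uniformity of the $o(t)$ remainder should be pinned to the $C^2$ regularity of $\phi$ on $[0,\infty)\times\{|z|<1\}$ for bounded $X(0)$, which the paper records just before the proposition.
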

\begin{proof} Suppose
$$\phi(0,z)=c\prod_{k=1}^n(z-z_k),$$
where $c>0$, $n\geq 3$, and $-1<z_1,...,z_n<0.$ By assumption, the generating function $\phi(t,z)$ of $X_t$ has only real roots for $t>0$. If $z_1=z_2=w$ is a root of $\phi(0,z)$
of multiplicity exactly two, and $\epsilon$ is small enough that $|z_k-w|>\epsilon$ for $k\geq 3$, then Rouch\'e's Theorem implies that for sufficiently small $t>0$, $\phi(t,z)$ has exactly two roots in the disk $\{z:|z-w|<\epsilon\}$. Therefore, for small $t>0$, there exist real $z(t)$ so that $\phi(t,z(t))=0$ and $\lim_{t\downarrow 0}z(t)=w$. By Taylor's Theorem, there exist $s(t)\in [0,t]$ and $y(t)$ between $z(t)$ and $w$ so that
$$\begin{aligned}0=&\phi(t,z(t))=t\frac{\partial\phi}{\partial t}(0,w)+\frac 12t^2\frac{\partial^2\phi}{\partial t^2}(s(t),y(t))\\&+t(z(t)-w)\frac{\partial^2\phi}{\partial t\partial z}(s(t),y(t))+\frac 12(z(t)-w)^2\frac{\partial^2\phi}{\partial z^2}(s(t),y(t)).\end{aligned}$$
Dividing by $t$ and letting $t\downarrow 0$ leads to
$$2\frac{\partial\phi}{\partial t}(0,w)\bigg/\frac{\partial^2\phi}{\partial z^2}(0,w)=-\lim_{t\downarrow 0}\frac{(z(t)-w)^2}t\leq0.$$
Noting that
$$\frac{\partial^2\phi}{\partial z^2}(0,w)=2c\prod_{k=3}^n(z-z_k),$$
we see that
\begin{equation}\label{tpartial}\frac{\partial\phi}{\partial t}(0,w)\end{equation}
changes sign when $z_3$ crosses $w$, and hence is zero when $z_3=w$. 

To exploit this fact, we need to compute (\ref{tpartial}). Recall the $k^{th}$ elementary symmetric polynomials $e_k(x_1,\dotsc,x_n)$ defined in (\ref{symm}).
If $\mu$ is the distribution of $X_0$ and $X_0\leq n$, then
$$\phi(0,z)=\sum_{k=0}^n\mu(k)z^k=c\prod_{k=1}^n(z-z_k)=c\sum_{k=0}^n(-1)^k e_k(z_1,...,z_n)z^{n-k},$$
so
$$\mu(k)=c(-1)^{n-k}e_{n-k}(z_1,...,z_n).$$
Therefore for $|z|<1$, by the Backward equations,
\begin{align*}\frac{\partial\phi}{\partial t}(0,z)=&\sum_{l=0}^\infty[\mu(l-1)\beta_{l-1}+\mu(l+1)\delta_{l+1}-\mu(l)(\beta_l+\delta_l)]z^l\\
=&c(1-z)\sum_{k=0}^n(-1)^{n-k}e_{n-k}(z_1,...,z_n)[\delta_kz^{k-1}-\beta_kz^k].\end{align*}
It follows that the expression on the right is zero if $z=z_1=z_2=z_3=w$ for any values of $w,z_4,...,z_n\in (-1,0)$. In this case,
$$e_k(z_1,...,z_n)=\sum_iw^i\binom 3ie_{k-i}(z_4,...,z_n),$$
where $i$ ranges from $\max(0,k+3-n)$ to $\min(k,3)$, so
$$\sum_{k=0}^n\sum_i(-1)^{n-k}\binom 3ie_{n-k-i}(z_4,...,z_n)[\delta_kw^{k+i-1}-\beta_kw^{k+i}]\equiv 0.$$ Interchanging the order of summation and letting $k\mapsto k-i$, we see that the coefficient of each of the $e_{n-k}$'s is zero:
$$\sum_i\binom 3i(-1)^i[\delta_{k-i}-\beta_{k-i}w]=0,$$
or equivalently $\delta_k-3\delta_{k+1}+3\delta_{k+2}-\delta_{k+3}=0$ and $\beta_k-3\beta_{k+1}+3\beta_{k+2}-\beta_{k+3}=0$. So the birth rates $\beta_k$ and death rates $\delta_k$ are quadratic functions of $k$, and the boundary term $\delta_0=0$ enforces $\delta_k=d_1k + d_2k^2$ for some $d_1,d_2$.
\end{proof}
With the next proposition we resolve the ``only if'' part of Theorem \ref{BD1}.
\begin{prop} The birth-death chain preserves stability only if the birth rate is constant.
\end{prop}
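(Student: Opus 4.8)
The plan is to abandon real-rootedness itself in favor of a weaker but \emph{bulk-sensitive} necessary condition. By Proposition \ref{tstable} every t-stable law on $\N$ is a sum of independent Bernoulli and Poisson variables, so (by Newton's inequalities, equivalently because $k!\,a_k$ is log-concave whenever the generating function is real-rooted, and this passes to the entire limit) its mass function $a_k=P(X=k)$ is \emph{ultra-log-concave} (ULC):
\[ k\,a_k(t)^2 \ge (k+1)\,a_{k-1}(t)\,a_{k+1}(t), \qquad k\ge 1,\ t\ge 0. \]
If the chain preserves stability, $a_k(t)=P(X_t=k)$ must obey this for every $t$. The key observation is that the \emph{Poisson} law saturates ULC at every level: for $a_k=e^{-\lambda}\lambda^k/k!$ one has $k\,a_k^2=(k+1)\,a_{k-1}a_{k+1}$ identically. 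I would therefore start the chain from a Poisson$(\lambda)$ initial law (t-stable, with $\lambda$ a free parameter) and differentiate the ULC gap at $t=0$.

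Set $g_k(t)=k\,a_k(t)^2-(k+1)a_{k-1}(t)a_{k+1}(t)$, so that $g_k\ge 0$ is forced while $g_k(0)=0$; preservation therefore requires $g_k'(0)\ge 0$ for all $k$. Using the backward equations $a_k'=\beta_{k-1}a_{k-1}+\delta_{k+1}a_{k+1}-(\beta_k+\delta_k)a_k$ together with the Poisson ratios $a_{k-1}/a_k=k/\lambda$ and $a_{k+1}/a_k=\lambda/(k+1)$, a short computation gives $g_k'(0)=-k\,a_k^2\,(u_{k+1}-2u_k+u_{k-1})$, where
\[ u_k:=\frac{a_k'(0)}{a_k}=\frac{k}{\lambda}\,\beta_{k-1}+\frac{\lambda}{k+1}\,\delta_{k+1}-\beta_k-\delta_k . \]
Hence $g_k'(0)\ge 0$ for all $k$ is equivalent to $u_k$ being \emph{concave} in $k$.

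It then remains to read off the second difference of $u_k$. Substituting $\beta_k=b_0+b_1k+b_2k^2$ and $\delta_k=d_1k+d_2k^2$ (both known to be quadratic by Proposition \ref{onlyif}) shows $u_k$ is a polynomial in $k$ of degree three with leading coefficient $b_2/\lambda$; when $b_2=0$ it is quadratic with $k^2$-coefficient $b_1/\lambda-d_2$. Since the second difference of $Ak^3+\dotsb$ equals $6Ak+O(1)$, if $b_2>0$ then $u_{k+1}-2u_k+u_{k-1}\to+\infty$, breaking concavity for large $k$; and if $b_2=0$ but $b_1>0$, the second difference is the constant $2(b_1/\lambda-d_2)$, which I make strictly positive by choosing $\lambda<b_1/d_2$ (any $\lambda$ if $d_2=0$). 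In either case ULC is violated for small $t>0$, contradicting preservation of stability, so $b_1=b_2=0$ and the birth rate is constant.

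The main obstacle is conceptual, not computational: the real-rootedness argument of Proposition \ref{onlyif} is structurally blind to $b_1$, because at a double root $w$ one has $\phi(0,w)=\partial_z\phi(0,w)=0$, which annihilates every contribution to $\partial_t\phi(0,w)$ except the one proportional to $w(d_2-b_2w)$; thus double-root perturbations can only constrain $b_2$ and $d_2$. Replacing real-rootedness by ULC, and choosing the uniquely degenerate Poisson test law with its tunable parameter $\lambda$, is exactly what exposes $b_1$. The one technical point to discharge is the differentiability of $t\mapsto a_k(t)$ at $0$ with termwise backward equations; since the differentiability established earlier assumed a bounded initial law, I would run the argument through the finitely supported approximations $\mathrm{Bin}(n,\lambda/n)$, which are t-stable and strictly ULC with gap $g_k(0)=O(1/n)$, and let $n\to\infty$: a strictly negative limiting value of $g_k'(0)$ then forces $g_k(t)<0$ for some small fixed $t$ and all large $n$.
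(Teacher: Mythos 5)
Your argument is correct in substance but follows a genuinely different route from the paper. The paper starts the chain from a point mass $x^k$, expands the transition probabilities to second order in $t$, passes to the $(k+2)$-th polarization via Grace--Walsh--Szeg\H{o}, and extracts the inequality $\beta_{k+1}\le\beta_k$ from the discriminant of the two ``large'' roots isolated by Rouch\'e's theorem; combined with Proposition \ref{onlyif} this forces $\beta$ constant. You instead replace real-rootedness by its weaker consequence of ultra-log-concavity, exploit the fact that the Poisson law saturates ULC at every level, and read off a necessary concavity condition on $u_k=a_k'(0)/a_k$ from the first-order perturbation of the ULC gap; the tunable parameter $\lambda$ then kills $b_1$ (and $b_2$ again) once the quadratic form of the rates from Proposition \ref{onlyif} is substituted. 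I have checked the computation: $g_k'(0)=-k a_k^2\,\Delta^2 u_k$ is right, as is the second difference $\tfrac{6b_2}{\lambda}k+2\bigl(\tfrac{b_1-2b_2}{\lambda}-b_2-d_2\bigr)$, and your diagnosis of why the double-root argument of Proposition \ref{onlyif} cannot see $b_1$ is accurate. What your approach buys is robustness and economy: it needs no polarization, no Rouch\'e, and no root-tracking, only Newton's inequalities and a degenerate test measure; what it costs is the analytic care you already flag, namely justifying $a_k'(0^+)=\beta_{k-1}a_{k-1}+\delta_{k+1}a_{k+1}-(\beta_k+\delta_k)a_k$ for the unbounded Poisson initial law. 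Your binomial-approximation fix works but, as stated, the step ``a strictly negative limiting $g_k'(0)$ forces $g_k(t)<0$ for some fixed small $t$ and all large $n$'' needs a remainder bound uniform in $n$ (e.g.\ a uniform bound on $(g_k^{(n)})''$ on a fixed interval, available here because $\mathrm{Bin}(n,\lambda/n)$ has uniformly bounded fourth moments and the rates are quadratic); alternatively, one can differentiate the Poisson master equation termwise directly, since the paper's estimate $|p_t(j,k)-\delta_{jk}|\le 1-p_t(j,j)\le(\beta_j+\delta_j)t=O(j^2)t$ and the finite second moment of Poisson give domination. Two minor points to make explicit: you also need Proposition \ref{prop1} (or the binomial surrogate) so that the Poisson initial law is admissible under the definition of ``preserves stability,'' and the conclusion $b_1=b_2=0$ uses non-negativity of the rates to exclude $b_1<0$ and $b_2<0$ --- the same implicit step the paper takes.
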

\begin{proof}
Assuming that the chain preserves stability, we will show the birth rates $\beta_k$ satisfy $\beta_{k}\geq \beta_{k+1}$ for each $k$, so by Proposition \ref{onlyif} $\beta_k$ is constant.

By iterating the Kolmogorov backward equations, one can obtain the following approximations for small $t>0$:\[p_t(k,k+1) = t(\beta_k+o(1)),\quad p_t(k,k+2)=\frac{t^2}2 (\beta_k\beta_{k+1}+o(1)).\] Similarly, \[p_t(k,k-j)=O(1)t^j,\] where $O(1)$ denotes a uniformly bounded quantity, and $o(1)\rightarrow 0$, as $t\rightarrow 0$. 

Suppose that we start the chain with $k$ particles; the initial distribution has generating function $f(x)=x^k$. We also can assume that $\beta_k,\beta_{k+1}>0$. Then the generating function for small $t>0$ will be:
\[f_t(x)=\cdots+(1+o(1))x^k+(\beta_k+o(1))tx^{k+1}+(\beta_k\beta_{k+1}+o(1))\frac{t^2}{2}x^{k+2}+\cdots\]
Since $f_t(x)$ is t-stable, by Theorem \ref{poly} the following polynomial has all real, negative roots: \[f_{t,k+2}(x)=\cdots+(1+o(1))x^k+\frac{2(\beta_k+o(1))}{k+2}tx^{k+1}+\frac{\beta_k\beta_{k+1}+o(1)}{(k+2)^2}t^2x^{k+2}.\]
As the hidden coefficients are $o(1)$, Rouch\'e's Theorem implies that $k$ roots of $f_{t,k+2}$ are also $o(1)$. Thus the remaining two roots $a,b$ satisfy 
\begin{align*}
a+b&=\frac{2(k+2)+o(1)}{\beta_{k+1}t} \\
ab&=\frac{(k+2)^2+o(1)}{\beta_k\beta_{k+1}t^2}.
\end{align*}
Solving for real $a,b$ implies that the discriminant \[4t^{-2}(k+2)^2[\beta_{k+1}^{-2}-(\beta_k\beta_{k+1})^{-1}+o(1)]\geq 0,\quad \mbox{for small }t>0.\] Taking $t\rightarrow 0$, we conclude that $\beta_k\geq \beta_{k+1}$.
\end{proof}

We now concentrate on the ``if'' part of Theorem \ref{BD1}. Here is a useful fact about quadratic death rates:
\begin{prop}\label{wright}
The generating function of the birth-death chain with rates $\beta_k=0$, $\delta_k=k(k-1)$ satisfies the Wright-Fisher partial differential equation:
\begin{equation*}
\frac{\partial}{\partial t}\phi(t,z)=z(1-z)\frac{\partial^2}{\partial z^2} \phi(t,z).
\end{equation*}
\end{prop}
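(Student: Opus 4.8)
The plan is to feed the transition dynamics of the chain directly into the power series defining $\phi$ and to recognize the two resulting sums as multiples of $\partial_z^2\phi$. The starting point is the term-by-term time derivative of the generating function, which was already justified in the preceding discussion: for $|z|<1$ and bounded $X(0)$ we have $\partial_t\phi(t,z)=\sum_{k}\frac{d}{dt}P(X_t=k)\,z^k$, and exactly as in the proof of Proposition \ref{onlyif} the Kolmogorov equations give, for the pure death chain ($\beta_k=0$, $\delta_k=k(k-1)$),
\[
\frac{\partial\phi}{\partial t}(t,z)=\sum_{k=0}^\infty\big[\delta_{k+1}P(X_t=k+1)-\delta_k P(X_t=k)\big]z^k.
\]

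First I would dispose of the analytic subtleties by treating bounded initial data. Since the chain is pure death, $X_0\leq n$ forces $X_t\leq n$ for all $t$, so $\phi(t,\cdot)$ is a genuine polynomial in $z$ of degree at most $n$ and every manipulation below is a finite sum. Next I would reindex the two sums. In the first, setting $j=k+1$ and using $\delta_j=j(j-1)$ gives $\sum_j j(j-1)P(X_t=j)z^{j-1}=z\,\partial_z^2\phi$, because $\partial_z^2\phi=\sum_j j(j-1)P(X_t=j)z^{j-2}$; in the second, $\sum_k k(k-1)P(X_t=k)z^k=z^2\,\partial_z^2\phi$. Subtracting yields $\partial_t\phi=(z-z^2)\partial_z^2\phi=z(1-z)\partial_z^2\phi$, which is the asserted equation. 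To remove the boundedness assumption I would use linearity of the operator $\partial_t-z(1-z)\partial_z^2$: the identity for each unit point mass extends to any finitely supported initial law, and then to a general one, since for $|z|<1$ the series $\sum_k k(k-1)P(X_t=k)z^k$ converges absolutely (as $P(X_t=k)\leq 1$ and $\sum_k k^2|z|^k<\infty$), so $\phi(t,\cdot)$ is analytic on $\{|z|<1\}$ and the reindexing is valid there for every initial law.

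The main obstacle here is bookkeeping rather than depth: the only point genuinely needing care is the justification of the term-by-term $t$-differentiation and its interchange with the $z$-reindexing. For bounded $X(0)$ this is automatic because $\phi$ is a polynomial in $z$, and the extension to unbounded $X(0)$ rests on the $C^2$ regularity of $\phi$ on $[0,\infty)\times\{z:|z|<1\}$ established above, together with the absolute convergence of the relevant series for $|z|<1$.
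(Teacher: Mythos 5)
Your proposal is correct and follows essentially the same route as the paper: both start from the term-by-term time derivative of $\phi$, insert the Kolmogorov forward equation for the pure death chain, and reindex the two sums to recognize $z\,\partial_z^2\phi$ and $z^2\,\partial_z^2\phi$. The only difference is cosmetic: you spell out the extension beyond bounded initial data, which the paper leaves implicit.
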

\begin{proof}
Let $p_t(l,k)=P(X_t=k|X_0=l)$ be the transition probabilities. Since the process stays bounded we can use the Kolmogorov Forward equation	:

\begin{equation}
\frac d{dt}p_t(l,k)=(k+1)kp_t(l,k+1)-k(k-1)p_t(l,k)
\end{equation}

Then, since $\mu_t(k)=\sum_l\mu(l)p_t(l,k)$, we have
\begin{align*}
\frac{\partial}{\partial t}\phi(t,z) = &\sum_{k,l}\mu(l)\frac d{dt}p_t(l,k)z^k \\
=&\sum_{k,l}\mu(l)[(k+1)kp_t(l,k+1)-k(k-1)p_t(l,k)]z^k\\
=&z(1-z)\sum_{k,l}\mu(l)k(k-1)p_t(l,k)z^{k-2}=z(1-z)\frac{\partial^2}{\partial z^2}\phi(t,z).
\end{align*}
\end{proof}

\begin{prop}\label{quaddeath}
The birth-death chain with rates $\beta_k=0$, $\delta_k=k(k-1)$ preserves stability.
\end{prop}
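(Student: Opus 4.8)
My plan is to run a continuity-of-roots argument off the Wright--Fisher equation of Proposition \ref{wright},
\[
\frac{\partial}{\partial t}\phi(t,z)=z(1-z)\frac{\partial^2}{\partial z^2}\phi(t,z),
\]
tracking how the zeros of $\phi(t,\cdot)$ move as $t$ grows. Since the chain starts from a stable (hence real-rooted) generating function of some finite degree $n$ with non-negative coefficients, it suffices to show that $\phi(t,\cdot)$ stays real-rooted for all $t>0$: a real polynomial is stable exactly when it is real-rooted, and stability implies t-stability. Using Theorem \ref{hurwitz} together with the coefficientwise continuity of the linear evolution $T_t$, I would first treat initial polynomials all of whose roots are \emph{distinct and strictly negative}, and then recover an arbitrary stable initial polynomial as a normal limit of these, the limit being a genuine (nonzero) generating function and so stable by Hurwitz.

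So suppose $\phi(0,\cdot)$ has $n$ distinct roots in $(-\infty,0)$; then $\mu_0(0)=\phi(0,0)>0$. Because $0$ is absorbing ($\beta_0=\delta_0=0$) the mass there never decreases, so $\mu_t(0)\ge\mu_0(0)>0$ for all $t$; as all coefficients of $\phi(t,\cdot)$ are non-negative and $\phi(t,1)=1$, this forces $\phi(t,z)>0$ for every $z\in[0,1]$. Hence the real zeros are confined to $(-\infty,0)\cup(1,\infty)$, and since they begin in $(-\infty,0)$ and cannot cross the root-free band $[0,1]$, they stay in $(-\infty,0)$ for as long as they remain real. The leading coefficient $\mu_t(n)=\mu_0(n)e^{-n(n-1)t}$ stays positive, so the degree is preserved and the roots stay bounded on compact time intervals.

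Now let $t_0$ be the first time a non-real root appears. On $[0,t_0)$ all $n$ roots are real and lie in $(-\infty,0)$, and by continuity they are still real, lying in $(-\infty,0]$, at $t_0$; since $\phi(t_0,0)>0$, any multiple root $w$ from which a conjugate pair is born satisfies $w<0$, so that $w(1-w)<0$. Near such a point I set $\tau=t-t_0$, $\zeta=z-w$, and let $m$ be the multiplicity of $w$. Differentiating the PDE repeatedly and using that all lower-order $z$-derivatives of $\phi$ vanish at $(t_0,w)$ gives the leading-order identity
\[
\frac{\partial^a}{\partial t^a}\frac{\partial^{\,m-2a}}{\partial z^{\,m-2a}}\phi(t_0,w)=\bigl(w(1-w)\bigr)^a\,\frac{\partial^m}{\partial z^m}\phi(t_0,w),
\]
so the Taylor expansion of $\phi$ about $(t_0,w)$ is governed to leading order by
\[
\sum_{a}\frac{\bigl(w(1-w)\bigr)^a}{a!\,(m-2a)!}\,\zeta^{\,m-2a}\tau^a=0 .
\]
Substituting $\zeta=s\sqrt{\tau}$ makes every term carry the common factor $\tau^{m/2}$, and the equation for $s$ becomes, up to a positive rescaling of the variable, the Hermite polynomial $H_m\bigl(s/\sqrt{2|w(1-w)|}\bigr)$, whose roots are all real. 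Thus every one of the $m$ branches $\zeta\sim s\sqrt{\tau}$ is real for $\tau>0$: the colliding roots stay on the real axis, no conjugate pair is born at $t_0$, and this contradiction shows $\phi(t,\cdot)$ is real-rooted for all $t>0$.

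The hard part is precisely this local model at a coalescence. A priori several roots can collide at once, and the naive double-root estimate $\zeta^2\sim-2w(1-w)\tau$ does not cover that; what rescues the argument is that, at a collision of \emph{any} order $m$, the PDE forces the local profile to be a Hermite polynomial, whose real-rootedness is equivalent to the sign condition $w(1-w)<0$ --- and that sign is guaranteed exactly because the absorbing boundary pins all roots strictly negative. Once real-rootedness is established for the dense family of distinct-negative-root initial polynomials, Corollary \ref{strongcor} and Hurwitz's theorem extend it to every stable initial polynomial, which completes the ``if'' direction.
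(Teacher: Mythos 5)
Your proof is correct, and its core engine is the same as the paper's: Taylor-expand $\phi$ in $t$ using the Wright--Fisher equation, rescale $z-w=\alpha\sqrt{t}$ at a multiple root $w$, and identify the leading-order profile as a Hermite polynomial whose argument is real precisely because $w(1-w)<0$. Where you genuinely depart from the paper is in the treatment of the root at the origin. The paper handles $w=0$ as a separate case: there the roots separate at scale $t$ rather than $t^{1/2}$, the local model is Kummer's function $_1F_1[1-n,1,-1/\alpha]$, and an interlacing argument recovers the last real root. You eliminate this case entirely by first perturbing the initial polynomial to have distinct, strictly negative roots, observing that $0$ is absorbing (indeed $\delta_1=0$ as well), so $\phi(t,0)\geq\phi(0,0)>0$ for all $t$ and no root can ever reach $[0,\infty)$; the general stable initial condition is then recovered by coefficientwise convergence (the degree is fixed under the pure death dynamics) and Hurwitz's theorem. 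This is a genuine simplification -- it trades the hypergeometric/interlacing analysis for a soft absorbing-boundary observation plus one limiting step -- at the modest cost of having to justify that the time-$t$ distribution depends continuously on the initial distribution, which is immediate here since $\mu_t(k)=\sum_j\mu_0(j)p_t(j,k)$ is a finite sum. Two small points you should make explicit: the sign-change argument that converts ``the limiting profile $H_m$ is real-rooted'' into ``the $m$ nearby roots of $\phi(t_0+\tau,\cdot)$ are real'' requires that $H_m$ have $m$ \emph{distinct} real roots (which it does, but a limit profile with a multiple root would not suffice); and the Taylor expansion in $t$ to order $\lfloor m/2\rfloor$ needs the smoothness of $\phi$ in $t$ established at the start of Section 4 of the paper. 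The constant in your rescaling of the Hermite argument ($\sqrt{2|w(1-w)|}$ versus the paper's $2\sqrt{w(w-1)}$) is off, but as you note only positivity of the rescaling matters.
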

\begin{proof}
As above, let $\phi(t,z)$ be the generating function of the chain at time $t$, and now assume that $\phi(0,z)$ has only real roots. Setting \[\tau =\inf\{t\geq 0;\ \phi(t,z) \mbox{ is not stable}\},\] by Hurwitz's Theorem (Theorem \ref{hurwitz}) $\phi(\tau,z)$ is stable. Hence it suffices to prove that for any stable initial distribution there exists an $\epsilon>0$ such that $\phi(t,z)$ is stable for all $0<t<\epsilon$.

For any root $w<0$ of $\phi(0,t)$, we can write 
\begin{equation}\label{phi}
\phi(0,z)=(z-w)^nq(z),
\end{equation}
where $n$ is the multiplicity and $q(w)\neq 0$. We will show that for all small enough $t$ the generating function $\phi(t,z)$ has n distinct real roots of distance approximately $\sqrt{t}$ from $w$. This follows directly from showing that 
\begin{equation}\label{nicepoly}
\phi(t,w+\alpha t^{1/2})=t^{n/2}p(\alpha)+o(t^{n/2}),\mbox{ as }t\downarrow 0,
\end{equation}
where $p(\alpha)$ has only simple real roots. Indeed, (\ref{nicepoly}) immediately implies that for all small enough $t$, $\phi(t,z)$ changes sign $n$ times near $w$. Since we can do this for each $w$, this shows that $\phi(t,z)$ has only real roots.

Set $\kappa=\lfloor n/2\rfloor$. To show (\ref{nicepoly}), we Taylor expand $\phi(t,z)$ in the first variable, i.e., for small $t>0$ \[\phi(t,z)=\sum_{k=0}^\kappa \frac{\partial^k\phi}{\partial t^k}(0,z)\frac{t^k}{k!}+o(t^{n/2}).\]
Now $k$ iterations of Proposition \ref{wright} shows that 
\begin{equation}\label{expand}
\frac{\partial^k\phi}{\partial t^k}(0,z)=[z(1-z)]^k\frac{\partial^{2k}\phi}{\partial z^{2k}}(0,z)+\sum_{j=1}^{2k}f_{j,k}(z) \frac{\partial^{2k-j}\phi}{\partial z^{2k-j}}(0,z),
\end{equation}
where the $f_{j,k}(z)$ are polynomials in $z$. 
With $z=w+\alpha t^{1/2}$ we have, by the product rule,
\begin{equation}\label{prodrule}
\frac{\partial^{2k-j}\phi}{\partial z^{2k-j}}(0,w+\alpha t^{1/2})=\frac{n!}{(n-2k+j)!}(\alpha t^{1/2})^{n-2k+j}q(w)+o(t^{\frac{n-2k+j}{2}}).
\end{equation}
We've also used the fact that $q(w+\alpha t^{1/2})=q(w)+o(1)$, because $q$ has no zeros at $w$.
From (\ref{prodrule}) we see that \[f_{j,k}(w+\alpha t^{1/2}) \frac{\partial^{2k-j}\phi}{\partial z^{2k-j}}(0,w+\alpha t^{1/2})=o(t^{\frac{n-2k}{2}}),\]and can be ignored for $j>0$.
Putting this all together, we have 
\[\sum_{k=0}^\kappa \frac{t^k}{k!}\frac{\partial^k\phi}{\partial t^k}(0,w+\alpha t^{1/2})= t^{n/2}\sum_{k=0}^\kappa \frac{n!}{k!(n-2k)!}(-1)^k[w(w-1)]^k\alpha^{n-2k}q(w) + o(t^{n/2}).\] Recalling the (physics) Hermite polynomial of order $n$,
\[H_n(\alpha)=\sum_{k=0}^\kappa (-1)^k\frac{\alpha^{n-2k}n!}{k!(n-2k)!},\] we obtain 
\[\phi(t,w+\alpha t^{1/2})=t^{n/2}[w(w-1)]^{n/2} H_n(\alpha/2\sqrt{w(w-1)})q(w)+o(t^{n/2}).\] $H_n$ is well known (e.g. \cite[\S 3.3]{szego}) to have $n$ distinct real roots, which, in the case that $w<0$ shows (\ref{nicepoly}). 

The case $w=0$ requires an additional argument. It will turn out, assuming $\phi(0,z)=z^nq(z)$, that $\phi(t,z)$ has $n-1$ distinct negative zeros with distance of order $t$ from zero, not $t^{1/2}$.

To show this we again Taylor expand in $t$. By Proposition \ref{wright} we see that 
\begin{equation*}
\frac{\partial^k\phi}{\partial t^k}(0,z)=z(1-z)\frac{\partial^2}{\partial z^2}\left[\cdots z(1-z)\frac{\partial^2}{\partial z^2} z^nq(z)\right],
\end{equation*}
where the operator $z(1-z)\partial_z^2$ is applied $k$ times to $z^nq(z)$. If we evaluate this at $z=\alpha t$, we obtain \[\frac{\partial^k\phi}{\partial t^k}(0,\alpha t)=[n(n-1)]\cdots [(n-k+1)(n-k)](\alpha t)^{n-k}\phi(0)+o(t^{n-k}).\] Thus the Taylor expansion gives \[\phi(t,\alpha t)=t^n\sum_{k=0}^{n-1}\frac{[n]_k[n-1]_k}{k!}\alpha^{n-k} + o(t^n),\]
where \[[n]_k=n(n-1)\cdots (n-k+1)\] is the falling factorial. The sum can be rewritten as \[\alpha^nn! _1\!F_1[1-n,1,-1/\alpha],\]where $_1\!F_1[a,b,x]$ is Kummer's hypergeometric function of the first kind. Now $_1\!F_1[1-n,1,-1/x]$ has $n-1$ distinct negative zeros $\{z_1,\dotsc z_{n-1}\}$ \cite[pp. 103]{slater}. Hence for $t$ small enough we can interlace $n-1$ zeros of $\phi(t,z)$ between the points $\{0,z_1,\dotsc,z_{n-1}\}$, and so the single remaining zero of $\phi(t,z)$ near 0 must also be real.
\end{proof}

\begin{proof}[Proof of ``if" direction in Theorem \ref{BD1}] By Proposition \ref{reaction}, the birth-death chain with constant birth and linear death rates preserves t-stability, and we just showed that the pure quadratic death chain preserves stability (and hence t-stability). However, the latter chain is no longer a Feller process, so we cannot immediately apply Trotter's product formula -- as we did with reaction-diffusion processes and independent Markov chains -- to combine the two processes. Indeed, it is well known that a pure quadratic death chain comes down from infinity in finite time, in the sense that $\liminf_{k\rightarrow\infty} p_t(k,1)>0$ for each $t>0$ \cite{kingman}. 

We rectify this situation by considering the Banach space $l^1(\N)$ of absolutely summable sequences. Let $X^{(1)}_t,\ X^{(2)}_t$, and $X^{(3)}_t$ be the birth-death chains with respective rates $\{\beta^{(1)}_k=b_0,\ \delta^{(1)}_k=d_1 k\}$, $\{\beta^{(2)}_k=0,\ \delta^{(2)}_k=d_2k(k-1)\}$, and $\{\beta^{(3)}_k=b_0,\ \delta^{(3)}_k=d_1 k+d_2k(k-1)\}$. With \[P^{(i)}(t)f(x)=\sum_y f(y)P(X^{(i)}_t=x|X^{(i)}_0=y)\] as the (adjoint) strongly continuous contraction semigroups on $l^1(\N)$, we consider the infinitesimal generators as the $l^1$ limit \[\Omega^{(i)}f=\lim_{t\downarrow 0}\frac{P^{(i)}(t)f-f}{t}.\] See \cite{reuter} for the theory of adjoint semigroups of Markov chains.

Let 
\begin{align*}D_0&=\{f\in l^1(\N);\ f(x)=0 \mbox{ for all but finitely many }x\},\\
D_e&=\{f\in l^1(\N);\ |f(x)|\leq Ce^{-x}\}, \mbox{ $C$ depending only on $f$, and}\\
\mathcal{D}(\Omega^{(i)}) &=\{f\in l^1(\N);\ \lim_{t\downarrow 0}t^{-1}(P^{(i)}(t)f-f) \mbox{ exists as an $l^1$ limit.}\}
\end{align*}
By explicit calculation, it can be seen that $D_0\subset D_e\subset \mathcal{D}(\Omega^{(i)})$ for each $i$, $P^{(i)}(t):D_0\rightarrow D_e$, and for $f\in D_e$, 
\[\Omega^{(i)}f(x)=\delta^{(i)}_{x+1}f(x+1)+\beta^{(i)}_{x-1}f(x-1)-[\beta^{(i)}_x+\delta^{(i)}_x]f(x).\]
By \cite[Prop. 3.3 of Ch. 1]{ethierkurtz}, $D_e$ is a core for all three generators. Also, \[\Omega^{(1)}+\Omega^{(2)}=\Omega^{(3)} \quad\mbox{ on }D_e,\] so we can apply Trotter's product formula to conclude preservation of t-stability for $X^{(3)}_t$.
\end{proof}


\end{document}